\chardef\bslash=`\\ 
\def\verbatim{\interlinepenalty\@M \@verbatim
  \leftskip\@totalleftmargin\advance\leftskip2pc
  \frenchspacing\@vobeyspaces \@xverbatim}
\newtheorem{thm}{Theorem}[section]
\newtheorem{cor}[thm]{Corollary}
\newtheorem{lem}[thm]{Lemma}
\newtheorem{pro}[thm]{Proposition}
\newtheorem{que}[thm]{Question}
\newtheorem{defin}[thm]{Definition}
\newtheorem{rem}[thm]{Remark}
\numberwithin{equation}{section}
\begin{document}


\title
{Periodic and Fixed Points of Multivalued maps on Euclidean spaces}
\author{R.~Z.~Buzyakova}
\address{Department of Mathematics and Statistics,
The University of North Carolina at Greensboro,
Greensboro, NC, 27402, USA}
\email{rzbouzia@uncg.edu}
\author{A.~Chigogidze}
\address{Department of Mathematics,
College of Staten Island,
Staten Island, NY, 10314, USA}
\email{alex.chigogidze@csi.cuny.edu}
\keywords{fixed point of multivalued map, colorable map, hyperspace, Vietoris topology, period of multivalued map at a point}
\subjclass{54H25, 58C30, 54B20}


\begin{abstract}{
We show, in particular, that a multivalued map $f$ from a closed
subspace $X$ of $\mathbb R^n$ to ${\rm exp}_k(\mathbb R^n)$ has a point of period exactly $M$
if and only if its continuous extension $\tilde f: \beta X\to {\rm exp}_k(\beta \mathbb R^n)$
has such a point. The result also holds if one repace $\mathbb R^n$ by a locally compact Lindel\"of space of finite dimension.
 We also show that if $f$ is a colorable map from
a normal space $X$ to the space ${\mathcal K}(X)$ of all compact subsets of $X$ then its extension $\tilde f:\beta X\to {\mathcal K}(\beta X)$ is fixed-point free.
}
\end{abstract}

\maketitle
\markboth{R.Z.Buzyakova, A. Chigogidze}{Periodic and Fixed Points of Multivalued maps on Euclidean spaces}
{ }

\section{Introduction}\label{S:intro}

Before we discuss the results of the paper we would like to give the definitions of the main concepts we 
study in this work.

By $\exp X$ we denote the space of all non-empty closed subsets of $X$ endowed with the Vietoris topology; $\exp_k X = \{A\in \exp X: |A|\leq k\}$;
and $\mathcal K (X)=\{K\in \exp X: K\ is \ compact\}$. Throughout the paper, $k$ is a positive integer in $\exp_k X$.
Let $X$ be a subset of $Z$ and  $f:X\to \exp Z$.  We say that $f$ fixes $x\in X$ if $x\in f(x)$. We say that $f$ has period $M$ at $x$ if $M$ is the smallest positive integer for which there exists a sequence $\langle x_1=x, x_2,...,x_M\rangle$ such that  $x_i\in f(x_{i-1})$ for $i=2,...,M$ and $x\in f(x_M)$.

It is proved in \cite[Theorem 2.10]{R2} that a continuous map $f$ from a closed subspace $X$ of $\mathbb R^n$ to $\exp_k (\mathbb R^n)$ has a fixed
point if and only if $\tilde f: \beta X\to \exp_k(\beta \mathbb R^n)$ has a fixed point. In this paper we show that this theorem is still true if one replace $\mathbb R^n$ by
any locally compact Lindel\"of space of finite dimension (Theorem \ref{thm:relativecolorableviaspectra}).  For our further discussion, recall that given a continuous map $f$
from a closed subspace $X$ of $Z$ into  $\exp Z$, a closed subset $F$ of $X$ is a color of $f$ 
if $F$ misses $\bigcup \{f(x):x\in F\}$. For a single-valued map $f:X\subset Z\to Z$, where $X$ is closed in $Z$,
this simply translates to the requirement that $F$ misses $f(F)$. Finally, $f$ is colorable if $X$ can be covered by finitely many colors.
The above mentioned theorem  \cite[Theorem 2.10]{R2} can be rewritten using coloring terminology as follows:
Any fixed-point free map from a closed subspace of $\mathbb R^n$ to $\exp_k \mathbb R^n$ is colorable. To a reader familiar with relations between fixed-point free
maps and colorability this may not seem surprising. The fact that a colorable self-map on a normal space $X$ has a fixed-point free extension
over $\beta X$ is a simple observation. When dealing with multivalued functions, however, 
significantly more work needs to be done to achieve an analogous statement.  
In case of single-valued map $f:X\to X$, it is an easy exercise to verify that given a coloring $\mathcal F$, the family $\mathcal G=\{f^{-1}(F):F\in \mathcal F\}$  is
a coloring as well. Moreover, thus defined coloring $\mathcal G$, has the property that $f(G)$ is closed for every $G\in \mathcal G$. This very property allows to conclude that every colorable single-valued self-map on a normal space has the fixed-point free extension over the  \v Cech-Stone compactification.
In section \ref{S:fixedpointfree} of this paper, we show that a colorable map $f$ from a normal space $X$ to its exponent
has a coloring with similar properties, namely, consisting of colors $F$ such that $F$ misses $cl_X(\bigcup \{f(x):x\in F\})$
(Lemma \ref{lem:goodcolors}). This result implies, in particular,
that  a colorable map from a normal space $X$ into the space of its compact subsets has the fixed-point free
extension $\tilde f:\beta X\to  \mathcal K( \beta X)$ (Corollary \ref{cor:colorabilityimpliesfpfextension}).

In Section \ref{S:periodicpoint}, we use the mentioned results about fixed-point free maps to derive a few results about periodic points.
We prove, in particular, that  a continuous map from a closed subspace $X$ of $\mathbb R^n$ to $\exp_k {\mathbb R}^n$ has a point of period
$M$ if and only if $\tilde f:\beta X\to \exp_k (\beta {\mathbb R}^n)$ has such a point. We then derive that a similar statement holds if
one replaces $\mathbb R^n$ with any locally compact Lindel\"of space of finite dimension. In the beginning of section  \ref{S:periodicpoint},
we give an outline of a quite straightforward argument of this fact for the case of single-valued self-maps and indicate complications that may occur
when one deals with single-valued not self-maps and even more complications in case of multivalued maps. For our results on periodic points it will be
 important that our fixed-point free map results are proved not only for self-maps but also for maps with smaller domains. We would like to
 remark that we find especially useful those statements in the fixed-point free map theory that deal with unequal range and domain (see, in particular,
 Remarks \ref{rem:importanceofrelativity} and \ref{rem:anotheruseofrelativity}).

Finally we would like to mention that colorability/fixed-point free map theory in Topology was inspired by works of Katetov \cite{K} and van Dowen \cite{D}.
A good account of results about colorability of some single-valued maps can be found in \cite{VM2}. 
 In notation and terminology, we will follow \cite{Eng}. 
For an arbitrary function $f: X\subset Z\to {\rm exp} Z$, by ${\rm Fix}(f)$ we denote the set of all points
$x$ for which $x\in f(x)$, that is, the set of all fixed points. Clearly the set of all
fixed points of a continuous map is closed in the domain. If $f$ is a contnuous
map from a closed subspace $X$ of a normal space $Z$ into ${\rm exp}_k(Z)$, symbol $\tilde f$
denotes the continuous map from $\beta X$ into ${\rm exp}_k(\beta Z)$ that coincides with $f$
on $X$. 

\section{More observations on fixed-point free multivalued maps}\label{S:fixedpointfree}

\par\bigskip
For convinience let us remind the definition of the Vietoris topology
on $\exp X$.
A standard neighborhood in $\exp X$ will be denoted as 
$$
\langle U_1,...,U_m\rangle = \{A\in \exp X: A\subset U_1\cup ...\cup U_m\ and\ U_i\cap A\not =\emptyset\ for \ all\ i=1,...,m\},
$$
where $U_1,...,U_m$ are open sets of $X$.

\par\bigskip
For desired results we will need a stronger versions of colorability, namely, colorability by bright colors.

\par\bigskip\noindent
\begin{defin}\label{defin:brightcolor}
For a continuous map $f$ from a closed subset $X$ of $Z$ into $\exp Z$, a closed  set $F\subset X$ is a bright color of $f$  if 
$F$ misses $cl_Z(\bigcup \{f(x):x\in F\})$.
\end{defin}

Our first result in this section is a generalization of the result \cite[Theorem 2.8]{R2} stating that every continuous
fixed-point free map $f$ from a closed subspace $X$ of $\mathbb R^n$ into $\exp_k(\mathbb R^n)$ is brightly colorable. 
For reference however, we will need the following obviously correct version of \cite[Theorem 2.8]{R2}:

\par\bigskip\noindent
\begin{thm}\label{thm:RnMainVersion} (version of \cite[Theorem 2.8]{R2})
Let $Y$ be a closed subspace of $\mathbb R^n$; $X$ a closed subspace of $Y$; and $f:X\to \exp_k Y$
a continuous fixed-point free map. Then $f$ is brightly colorable.
\end{thm}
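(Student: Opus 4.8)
The plan is to deduce this relative version directly from \cite[Theorem 2.8]{R2} by enlarging the target from $\exp_k Y$ to $\exp_k\mathbb R^n$. Since every member of $\exp_k Y$ is a set of at most $k$ points, it is automatically closed in $\mathbb R^n$, so there is a natural inclusion $\iota\colon \exp_k Y\hookrightarrow \exp_k \mathbb R^n$; because $Y$ is closed in $\mathbb R^n$, this inclusion is a (closed) topological embedding for the Vietoris topology. First I would set $g=\iota\circ f\colon X\to \exp_k\mathbb R^n$ and record three facts: $g$ is continuous; $X$ is closed in $\mathbb R^n$, being closed in the closed set $Y$; and $g$ is fixed-point free, since $g(x)=f(x)$ as subsets of $\mathbb R^n$ and $x\notin f(x)$ for every $x\in X$.

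Next I would apply \cite[Theorem 2.8]{R2} to $g$. This yields a finite cover $\mathcal F$ of $X$ by closed sets $F$ that are bright colors of $g$ with respect to the ambient space $\mathbb R^n$; that is, $F\cap cl_{\mathbb R^n}\bigl(\bigcup\{g(x):x\in F\}\bigr)=\emptyset$ for each $F\in\mathcal F$.

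The remaining point is to transfer this coloring from the ambient space $\mathbb R^n$ to the ambient space $Y$ demanded by Definition \ref{defin:brightcolor}. For any $F\subset X$ the union $S_F=\bigcup\{f(x):x\in F\}$ is contained in $Y$, and since $Y$ is closed in $\mathbb R^n$ one has $cl_{\mathbb R^n}(S_F)\subset Y$, whence $cl_{\mathbb R^n}(S_F)=cl_Y(S_F)$. Consequently each $F\in\mathcal F$ also satisfies $F\cap cl_Y(S_F)=\emptyset$, so the very same family $\mathcal F$ is a bright coloring of $f$ in the sense required, and $f$ is brightly colorable.

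I would not expect a genuine obstacle here. The only steps that warrant care are the first, namely checking that the Vietoris topology of $\exp_k Y$ coincides with the subspace topology inherited from $\exp_k\mathbb R^n$ (which guarantees continuity of $g$ and preservation of fixed-point freeness), and the closure identity $cl_{\mathbb R^n}(S_F)=cl_Y(S_F)$; both rest squarely on $Y$ being closed in $\mathbb R^n$. This is precisely why the statement is an \textit{obviously correct version} of \cite[Theorem 2.8]{R2}: no new combinatorial or geometric content is needed, only a bookkeeping argument identifying closures taken in $Y$ with those taken in $\mathbb R^n$.
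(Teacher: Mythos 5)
Your proposal is correct and follows precisely the route the paper intends: the paper states Theorem \ref{thm:RnMainVersion} without proof, calling it an ``obviously correct version'' of \cite[Theorem 2.8]{R2}, and your reduction --- composing $f$ with the inclusion $\exp_k Y\hookrightarrow\exp_k\mathbb R^n$ (valid since $Y$ is closed in $\mathbb R^n$), applying the absolute theorem, and transferring the bright coloring back via the identity $cl_{\mathbb R^n}(S_F)=cl_Y(S_F)$ --- is exactly the bookkeeping the authors deem obvious. The only remark worth making is that you need slightly less than you check: continuity of the inclusion (rather than the full embedding statement) already suffices, since fixed-point freeness and the closure identity are purely set-theoretic consequences of $Y$ being closed.
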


\par\bigskip
We are now ready to prove our first result in this section. The terminology, basic facts, and references related to spectra
can be found in \cite{chibook}.  It should be noted that the result we are about to prove is a particular case of a more general statement 
about periodic points to be proved in the next section. However for readability purpose we choose to prove this theorem separately
and reference to its argument later when we prove the mentioned more general statement.

\par\bigskip\noindent
\begin{thm}\label{thm:relativecolorableviaspectra}
Let $Y$ be a locally compact Lindel\"of space of $\dim X = n$; $X$ its closed subspace;
and $f:X\to {\exp}_k(Y)$ a continuous fixed-point free map. Then $f$ is brightly colorable.
\end{thm}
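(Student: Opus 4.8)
The plan is to reduce the statement to its Euclidean predecessor, Theorem \ref{thm:RnMainVersion}, by resolving $Y$ into metrizable, indeed Euclidean, pieces. Using the spectral machinery of \cite{chibook}, I would represent $Y$ as the limit $Y=\varprojlim\{Y_\alpha,\,p^\beta_\alpha,\,A\}$ of a factorizing $\omega$-spectrum whose index set $A$ is $\sigma$-complete, whose bonding maps $p^\beta_\alpha$ (and hence limit projections $p_\alpha\colon Y\to Y_\alpha$) are perfect, and whose factors $Y_\alpha$ are locally compact, separable metrizable, and of dimension $\le n$. Each such $Y_\alpha$ embeds as a \emph{closed} subspace of some $\mathbb R^{m}$ (embed the one-point compactification of $Y_\alpha$ into a sphere and delete the point at infinity). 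Since $\exp_k$ is a continuous functor it commutes with the limit of this spectrum, so $\exp_k(Y)=\varprojlim\{\exp_k(Y_\alpha),\exp_k(p^\beta_\alpha)\}$; and since $p_\alpha$ is perfect, $X_\alpha:=p_\alpha(X)$ is closed in $Y_\alpha$ with $X=\varprojlim\{X_\alpha,p^\beta_\alpha|_{X_\beta}\}$. By the factorization theorem for morphisms of $\omega$-spectra, $f$ is, over a cofinal subset of $A$, the limit of a compatible family of continuous maps $f_\alpha\colon X_\alpha\to\exp_k(Y_\alpha)$, i.e. $\exp_k(p_\alpha)\circ f=f_\alpha\circ(p_\alpha|_X)$, equivalently $p_\alpha(f(x))=f_\alpha(p_\alpha(x))$.

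The heart of the matter is to locate a single index $\alpha$ at which $f_\alpha$ is again fixed-point free; once this is done the rest is routine. I would encode fixed-point freeness through the closed ``membership'' sets $\mathcal E=\{(y,B)\in Y\times\exp_k Y:\ y\in B\}$ and $\mathcal E_\alpha=\{(y,B)\in Y_\alpha\times\exp_k Y_\alpha:\ y\in B\}$, writing $\Pi_\alpha=p_\alpha\times\exp_k(p_\alpha)$ for the induced projection and $h\colon X\to Y\times\exp_k Y$, $h(x)=(x,f(x))$, so that $f$ is fixed-point free exactly when $h(X)\cap\mathcal E=\emptyset$, and likewise for $f_\alpha$ with $h_\alpha(x_\alpha)=(x_\alpha,f_\alpha(x_\alpha))$. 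The key observation is the identity $\mathcal E=\bigcap_{\alpha}\Pi_\alpha^{-1}(\mathcal E_\alpha)$: if $B$ has at most $k$ points and $p_\alpha(y)\in p_\alpha(B)$ for every $\alpha$, then for each $b\in B$ the set of indices with $p_\alpha(b)=p_\alpha(y)$ is downward closed, and these finitely many sets cover the directed $A$; hence one of them is cofinal and so equal to $A$, forcing $b=y\in B$. Consequently the open set $W:=(Y\times\exp_k Y)\setminus\mathcal E$ equals $\bigcup_\alpha\Pi_\alpha^{-1}(W_\alpha)$ with $W_\alpha:=(Y_\alpha\times\exp_k Y_\alpha)\setminus\mathcal E_\alpha$.

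Now the interplay of the two standing hypotheses does the work, and this is the step I expect to be the main obstacle. Because $X$ is closed in the Lindel\"of space $Y$ it is Lindel\"of, hence so is $h(X)$; as $h(X)\subseteq W=\bigcup_\alpha\Pi_\alpha^{-1}(W_\alpha)$, countably many of these preimages already cover $h(X)$, say those for $\alpha_1,\alpha_2,\dots$. Since $A$ is $\sigma$-complete, the supremum $\alpha=\sup_i\alpha_i$ lies in $A$, and one checks that $h(X)\subseteq\Pi_\alpha^{-1}(W_\alpha)$: each $\Pi_{\alpha_i}^{-1}(W_{\alpha_i})$ is contained in $\Pi_\alpha^{-1}(W_\alpha)$, because membership is preserved under $p^\alpha_{\alpha_i}$ and so the pullback of $W_{\alpha_i}$ misses $\mathcal E_\alpha$. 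Thus $h_\alpha(X_\alpha)=\Pi_\alpha(h(X))\subseteq W_\alpha$ is disjoint from $\mathcal E_\alpha$, i.e. $f_\alpha$ is fixed-point free. This is exactly where non-compactness of $Y$ would otherwise defeat a naive ``reflect the empty fixed-point set to a finite level'' argument: emptiness of an inverse limit of nonempty closed sets need not reflect to a single term, and it is the combination of Lindel\"ofness (to pass from a cover to a countable subcover) with $\sigma$-completeness of the spectrum (to collapse countably many levels into one) that rescues the descent.

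Finally, at the chosen index $X_\alpha\subseteq Y_\alpha$ are closed subspaces of $\mathbb R^{m}$ and $f_\alpha\colon X_\alpha\to\exp_k(Y_\alpha)$ is continuous and fixed-point free, so Theorem \ref{thm:RnMainVersion} furnishes a finite bright coloring $\{F^1_\alpha,\dots,F^r_\alpha\}$ of $f_\alpha$. I claim the pullbacks $F^i:=(p_\alpha|_X)^{-1}(F^i_\alpha)$ form a bright coloring of $f$. They are closed and cover $X$; and for $x\in F^i$ one has $p_\alpha(f(x))=f_\alpha(p_\alpha(x))\subseteq C^i_\alpha:=\bigcup\{f_\alpha(z):z\in F^i_\alpha\}$, whence $\bigcup\{f(x):x\in F^i\}\subseteq p_\alpha^{-1}(C^i_\alpha)$ and therefore, by continuity of $p_\alpha$, $cl_Y(\bigcup\{f(x):x\in F^i\})\subseteq p_\alpha^{-1}(cl_{Y_\alpha}(C^i_\alpha))$. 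Since $F^i_\alpha\cap cl_{Y_\alpha}(C^i_\alpha)=\emptyset$, taking $p_\alpha$-preimages gives $F^i\cap cl_Y(\bigcup\{f(x):x\in F^i\})=\emptyset$, so each $F^i$ is a bright color. Only continuity of $p_\alpha$ is used here, so the entire difficulty is concentrated in the spectral descent of the previous paragraph, which is precisely the argument to be reused, in elaborated form, for the periodic-point theorem of the next section.
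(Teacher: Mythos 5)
Your proof is correct, and its skeleton coincides with the paper's: represent $Y$ (and $X$, and $\exp_k Y$) as limits of factorizing $\omega$-spectra with locally compact separable metrizable factors of dimension at most $n$, use the Spectral Theorem to get the fiber maps $f_\alpha$, descend fixed-point freeness to one metrizable level, apply Theorem \ref{thm:RnMainVersion} there, and pull the bright coloring back through $p_\alpha^{-1}$ (your pullback verification, via $\operatorname{cl}_Y(\bigcup\{f(x):x\in F^i\})\subset p_\alpha^{-1}(\operatorname{cl}_{Y_\alpha}(C^i_\alpha))$, is just the preimage form of the paper's push-forward computation $q_\beta(\operatorname{cl}_Y(\cdot))\subset\operatorname{cl}_{Y_\beta}(q_\beta(\cdot))$, both using only continuity). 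Where you genuinely diverge is the descent step, which you rightly flag as the heart of the matter. The paper witnesses fixed-point freeness by a countable family of functionally open sets $G_i\subset X$ and $\mathcal U_i\subset\exp_k Y$ with $f(G_i)\subset\mathcal U_i$ and $(\bigcup\mathcal U_i)\cap G_i=\emptyset$, pushes each pair down to a level $\alpha_i$ using factorizability of the spectra (\cite[Proposition 1.3.1]{chibook}), and takes $\beta\geq\alpha_i$ by $\omega$-completeness. You instead encode fixed-point freeness as disjointness of $h(X)$ from the membership relation $\mathcal E$ and prove the identity $\mathcal E=\bigcap_\alpha\Pi_\alpha^{-1}(\mathcal E_\alpha)$ by the finiteness-plus-directedness argument (finitely many downward-closed sets covering a directed set, one of which must then be all of $A$), after which Lindel\"ofness of $h(X)$ and $\omega$-completeness finish as in the paper. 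Your version has the merit of isolating exactly where the hypothesis $\exp_k$ enters (the identity for $\mathcal E$ hinges on members having at most $k$ points; it would fail for $\exp$), and it bypasses factorizability at this step, whereas the paper's functionally-open-cover argument is the more standard factorization device and transfers unchanged to other witnesses of fixed-point freeness. Two small points you should tighten: the cofinal subset of $A$ supplied by the Spectral Theorem must be taken $\omega$-closed (as the paper states explicitly) so that your supremum $\alpha=\sup_i\alpha_i$ both exists and lies among the indices where $f_\alpha$ is defined; and your explicit closed embedding of each $Y_\alpha$ into $\mathbb R^m$ via the one-point compactification is a welcome addition, since the paper applies Theorem \ref{thm:RnMainVersion} to $f_\beta$ leaving that embedding implicit.
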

\begin{proof}
By Theorem \ref{thm:RnMainVersion}, we may assume that 
$\omega(Y) > \omega$. By \cite{BC}, $Y = \lim{\mathcal S}_{Y}$, where ${\mathcal S}_{Y} = \{ Y_{\alpha}, q_{\alpha}^{\beta},A\}$ is a factorizing $\omega$-spectrum consisting of locally compact separable metrizable spaces $Y_{\alpha}$ and perfect projections $q_{\alpha}^{\beta} \colon Y_{\beta} \to Y_{\alpha}$, $\beta \geq \alpha$.

Since $X$ is closed in $Y$ it is the limit of the induced spectrum ${\mathcal S}_{X} = \{ X_{\alpha}, p_{\alpha}^{\beta}, A\}$, 
where $X_{\alpha} = q_{\alpha}(X)$ and $p_{\alpha}^{\beta} = q_{\alpha}^{\beta}|_{X_{\beta}}$, 
$\beta \geq \alpha$, $\alpha, \beta \in A$. Note that ${\mathcal S}_{X}$ is also an $\omega$-spectrum. 
It is factorizing since so is ${\mathcal S}_{Y}$ and $X$ is $C$-embedded in $Y$. Next consider the spectrum $\exp_{k}{\mathcal S}_{Y} = \{ \exp_{k}Y_{\alpha}, \exp_{k}q_{\alpha}^{\beta}, A \}$. Continuity of the functor $\exp_{k}$ guarantees that $\exp_{k}{\mathcal S}_{Y}$ is also a factorizing $\omega$-spectrum consisting of locally compact and Lindel\"{o}f spaces and perfect projections.  

(Note that if $\dim Y \leq n$, then, by \cite[Theorems 1.3.4 and 1.3.10]{chibook}, we may assume without loss of generality that each $Y_{\alpha}$ in the spectrum ${\mathcal S}_{Y}$ is also at most $n$-dimensional.) 

By the Spectral Theorem \cite[Theorem 1.3.4]{chibook} applied to the spectra ${\mathcal S}_{X}$, $\exp_{k}{\mathcal S}_{Y}$, and the map $f \colon X \to \exp_{k}Y$, 
we may assume (if necessary passing to a cofinal and $\omega$-complete subset of $A$) that for each $\alpha \in A$ there 
is a map $f_{\alpha} \colon X_{\alpha} \to \exp_{k}Y_{\alpha}$ such that $\exp_{k}q_{\alpha}\circ f = f_{\alpha}\circ p_{\alpha}$. Since $f$ is 
fixed-point free and $X$ is Lindel\"{o}f, we can find a countable functionally open cover $\{ G_{i} \colon i \in \omega\}$ of $X$ and 
a countable collection $\{ \mathcal U_{i} \colon i\in \omega\}$ of functionally open subsets of $\exp_k Y$ such that $f(G_{i}) \subset \mathcal U_{i}$ and 
$(\bigcup \mathcal U_{i})\cap G_{i} = \emptyset$ for each $i \in \omega$. Factorizability of spectra ${\mathcal S}_{X}$ and ${\mathcal S}_{Y}$ 
guarantees (see \cite[Proposition 1.3.1]{chibook}) the existence of an index $\alpha_{i} \in A$ such that 
$G_{i} = p_{\alpha_{i}}^{-1}(p_{\alpha_{i}}(G_{i}))$ and $\mathcal U_{i} = (\exp_k q_{\alpha_{i}})^{-1}(\exp_k q_{\alpha_{i}}(\mathcal U_{i}))$, $i \in \omega$. 
Choose $\beta \in A$ so that $\beta \geq \alpha_{i}$ for each $i \in \omega$  -- this is possible because $A$ 
is an $\omega$-complete set (see \cite[Corollary 1.1.28]{chibook}) --  and note that $G_{i} = p_{\beta}^{-1}(p_{\beta}(G_{i}))$ and 
$\mathcal U_{i} = (\exp_k q_{\beta})^{-1}(\exp_k q_{\beta}(\mathcal U_{i}))$ for each $i \in \omega$. Obviously, $p_{\beta}(G_{i}) \cap [\bigcup \exp_k q_{\beta}(\mathcal U_{i})] = \emptyset$, 
$i \in \omega$, from which it follows that $f_{\beta} \colon X_{\beta} \to \exp_{k}Y_{\beta}$ is fixed-point free.   

By Theorem \ref{thm:RnMainVersion}, $f_{\beta} \colon X_{\beta} \to \exp_{k}Y_{\beta}$ is brightly colorable. Let $\{ F_{j}\}$ be a finite closed cover of $X_{\beta}$ consisting of bright colors of $f_{\beta}$. In order to complete the proof it suffices to show that each of the sets $p_{\beta}^{-1}(F_{j})$ is a bright color with respect to $f$. Indeed, assuming that this is not the case we can find $x \in p_{\beta}^{-1}(F_{j}) \cap \operatorname{cl}_{Y}(\bigcup\{ f(x) \colon x \in p_{\beta}^{-1}(F_{j})\})$. Then $p_{\beta}(x) \in F_{j}$ and 
\begin{multline*}
p_{\beta}(x) = q_{\beta}(x) \in q_{\beta}\large( \operatorname{cl}_{Y}(\bigcup\{ f(x) \colon x \in p_{\beta}^{-1}(F_{j})\})\large) \subset\\
 \operatorname{cl}_{Y_{\beta}}(q_{\beta}( \bigcup\{ f(x) \colon x \in p_{\beta}^{-1}(F_{j}\})) = 
 \operatorname{cl}_{Y_{\beta}}( \bigcup\{ q_{\beta}(f(x)) \colon x \in p_{\beta}^{-1}(F_{j}\})) = \\
\operatorname{cl}_{Y_{\beta}}( \bigcup\{ f_{\beta}(p_{\beta}(x)) \colon x \in p_{\beta}^{-1}(F_{j}\})) \subset \operatorname{cl}_{Y_{\beta}} (\bigcup\{ f_{\beta}(z) \colon z \in F_{j}\}),
\end{multline*}
\noindent which contradicts brightness of $F_{j}$.
\end{proof}

\par\bigskip\noindent
For our further discussion we need the following statement proved in \cite{R2}, in which ${\mathcal K}(X)$
is the subspace $\{K\in \exp X: K\ is \ compact\}$ of $\exp X$.
\par\bigskip\noindent
\begin{pro}\label{pro:proposition29}(\cite[Proposition 2.9]{R2})
If $X$ is a closed subspace of a normal space $Z$; $f:X\to {\mathcal K}(Z)$ continuous; and $\mathcal F$ a bright
coloring of $f$, then  the family $\{\beta F:F\in {\mathcal F}\}$ is a bright coloring of $\tilde f:\beta X\to {\mathcal K}(\beta Z)$.
\end{pro}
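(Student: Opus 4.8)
The plan is to check the two defining requirements of a bright coloring in turn: that $\{\beta F:F\in\mathcal F\}$ is a finite closed cover of $\beta X$, and that each $\beta F$ is a bright color of $\tilde f$. The covering part is the routine one. Since $F$ is closed in the closed subspace $X$ of the normal space $Z$, the set $F$ is $C^{*}$-embedded in $Z$, so $\beta F=\operatorname{cl}_{\beta X}F=\operatorname{cl}_{\beta Z}F$; and because $\mathcal F$ is \emph{finite}, $\bigcup\{\operatorname{cl}_{\beta X}F:F\in\mathcal F\}=\operatorname{cl}_{\beta X}(\bigcup\mathcal F)=\operatorname{cl}_{\beta X}X=\beta X$. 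Each $\beta F$ is compact, hence closed. So the entire content lies in establishing brightness.

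To do that, I would fix $F\in\mathcal F$ and set $A=\bigcup\{f(x):x\in F\}$. Since $F$ is a bright color, $F$ and $\operatorname{cl}_{Z}A$ are disjoint closed subsets of the normal space $Z$, so Urysohn's lemma provides a continuous $\phi\colon Z\to[0,1]$ with $\phi|_{F}\equiv 0$ and $\phi|_{\operatorname{cl}_{Z}A}\equiv 1$; let $\beta\phi\colon\beta Z\to[0,1]$ be its \v{C}ech--Stone extension. The key idea is to trade the hyperspace-valued map $\tilde f$ for a scalar one. On the hyperspace $\mathcal K(\beta Z)=\exp(\beta Z)$ (which is compact, as $\beta Z$ is) define $\Phi(K)=\min_{z\in K}\beta\phi(z)$; the minimum is attained because every closed subset of $\beta Z$ is compact, and a routine check against the Vietoris subbasis shows $\Phi$ is continuous. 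Then $\Psi=\Phi\circ\tilde f\colon\beta X\to[0,1]$ is continuous.

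The crux is to control $\tilde f$ on the remainder $\beta F\setminus F$, where a priori it is known only through abstract continuity and takes compact sets rather than points as values. For $x\in F$ we have $\tilde f(x)=f(x)\subset A$, and $\phi\equiv 1$ on $A$, so $\Psi(x)=\Phi(f(x))=1$. As $\Psi$ is continuous and $\equiv 1$ on $F$, it is $\equiv 1$ on $\operatorname{cl}_{\beta X}F=\beta F$. Hence for every $\xi\in\beta F$ one has $\min_{\eta\in\tilde f(\xi)}\beta\phi(\eta)=1$, which (since $\beta\phi\le 1$) forces $\tilde f(\xi)\subset(\beta\phi)^{-1}(1)$. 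Therefore
\[
\operatorname{cl}_{\beta Z}\Bigl(\bigcup\{\tilde f(\xi):\xi\in\beta F\}\Bigr)\subset(\beta\phi)^{-1}(1),
\]
a closed set. On the other hand $\phi\equiv 0$ on $F$ gives $\beta F=\operatorname{cl}_{\beta Z}F\subset(\beta\phi)^{-1}(0)$, and $(\beta\phi)^{-1}(0)\cap(\beta\phi)^{-1}(1)=\emptyset$. Thus $\beta F$ misses $\operatorname{cl}_{\beta Z}(\bigcup\{\tilde f(\xi):\xi\in\beta F\})$, i.e. $\beta F$ is a bright color of $\tilde f$, as required.

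I expect the main obstacle to be precisely this passage from $F$ to $\beta F$: the set-containment $f(x)\subset A$ valid on the dense subset $F$ must be pushed to the points of $\beta F\setminus F$, and there is no direct way to evaluate $\tilde f$ there. Collapsing the hyperspace statement to the single scalar equation $\Psi\equiv 1$ is what lets ordinary continuity of $\Psi$ (hence of the \v{C}ech--Stone extension $\beta\phi$) carry the information across the remainder; choosing $\min$ rather than $\max$ in the definition of $\Phi$ is essential, since only the minimum being $1$ forces the whole value set into $(\beta\phi)^{-1}(1)$.
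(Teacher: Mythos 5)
Your argument is correct, but it takes a genuinely different route from the one the paper relies on. The paper does not actually prove this proposition --- it imports it from \cite{R2} --- and its closest internal analogue is Lemma \ref{lem:betaofNbrighcolor} (the $N$-bright version), which argues via the closure identity $\operatorname{cl}_{\beta Z}(\bigcup\{\tilde f(\xi):\xi\in\beta F\})=\operatorname{cl}_{\beta Z}(\bigcup\{f(x):x\in F\})$ together with the standard fact that disjoint closed subsets of a normal space have disjoint closures in its \v{C}ech--Stone compactification. That route is shorter, but its crux --- the inclusion $\bigcup\{\tilde f(\xi):\xi\in\beta F\}\subset\operatorname{cl}_{\beta Z}(\bigcup\{f(x):x\in F\})$, i.e.\ exactly the passage from $F$ to $\beta F$ that you single out as the main obstacle --- is left implicit there; it rests on the observation that for closed $C\subset\beta Z$ the set $\{K\in\exp \beta Z: K\subset C\}$ is Vietoris-closed, so that $\tilde f(\operatorname{cl}_{\beta X}F)\subset\operatorname{cl}(\tilde f(F))$ lands inside it. Your Urysohn-plus-minimum device makes this step explicit and quantitative: collapsing the hyperspace-valued map to the scalar $\Psi=\Phi\circ\tilde f$, using density of $F$ in $\beta F$ to get $\Psi\equiv 1$ there, and observing that only $\min$ (not $\max$) equal to $1$ forces $\tilde f(\xi)\subset(\beta\phi)^{-1}(1)$, is a correct and transparent substitute. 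What the closure-identity route buys is brevity and immediate generalization to $N$-bright colorings (compositions $f^{n}$, as in Lemma \ref{lem:betaofNbrighcolor}); what yours buys is a self-contained verification in which the only nontrivial hyperspace input --- continuity of $\Phi(K)=\min_{z\in K}\beta\phi(z)$ on the compact hyperspace $\exp \beta Z=\mathcal K(\beta Z)$ --- is checked directly, and normality enters exactly once through Urysohn's lemma. The peripheral steps in your write-up (the identifications $\beta F=\operatorname{cl}_{\beta X}F=\operatorname{cl}_{\beta Z}F$ via $C^{*}$-embedding of closed sets in normal spaces, the covering of $\beta X$ by the finitely many compact sets $\beta F$, and attainment of the minimum on the compact, nonempty values of $\tilde f$) are all correct as stated.
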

\par\bigskip\noindent
For our next statement we will use the following particular case of Proposition \ref{pro:proposition29}:
{\it If $X$ is a closed subspace of a normal space $Z$, $f:X\to \exp_k Z$ continuous, and $\mathcal F$ a bright
coloring of $f$, then  the family $\{\beta F:F\in {\mathcal F}\}$ is a bright coloring of $\tilde f:\beta X\to \exp_k(\beta Z)$.} 
Thus, Proposition \cite[Proposition 2.9]{R2}  and our Theorem \ref{thm:relativecolorableviaspectra} imply the following:

\par\bigskip\noindent
\begin{cor}\label{cor:relativecriteriaspectra}
Let $Z$ be a locally compact Lindel\"of space of finite dimension; $X$ its closed subspace;
and $f:X\to {\exp}_k(Z)$ a continuous map. Then $f$ is fixed-point free
if and only $\tilde f: \beta X\to {\exp}_k(\beta Z)$ is fixed-point free.
\end{cor}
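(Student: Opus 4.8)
The plan is to prove the two implications separately. The converse direction is immediate because $\tilde f$ extends $f$, while the forward direction is obtained as a direct combination of Theorem \ref{thm:relativecolorableviaspectra} with the particular case of Proposition \ref{pro:proposition29} recorded just above.

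First I would dispose of the easy direction. Suppose $\tilde f$ is fixed-point free and, for contradiction, that $f$ had a fixed point $x \in X$, so that $x \in f(x)$ inside $Z \subseteq \beta Z$. Since $\tilde f$ agrees with $f$ on $X \subseteq \beta X$, this yields $x \in \tilde f(x)$, contradicting the fixed-point freeness of $\tilde f$. Hence $f$ is fixed-point free; equivalently, every fixed point of $f$ is already a fixed point of $\tilde f$.

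For the forward direction, assume $f$ is fixed-point free. Being locally compact and Lindel\"of, $Z$ is in particular normal, and its closed subspace $X$ inherits the finite dimension of $Z$. Thus the hypotheses of Theorem \ref{thm:relativecolorableviaspectra} are satisfied with $Y = Z$, and $f$ is brightly colorable; fix a finite bright coloring $\mathcal F$ of $f$. Applying the particular case of Proposition \ref{pro:proposition29} (valid since $Z$ is normal), the family $\{\beta F : F \in \mathcal F\}$ is a bright coloring of $\tilde f : \beta X \to \exp_k(\beta Z)$.

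It remains to observe that bright colorability forces fixed-point freeness. Given $y \in \beta X$, choose a bright color $\beta F$ with $y \in \beta F$. Then $\tilde f(y) \subseteq \bigcup\{\tilde f(z) : z \in \beta F\}$, while $y \notin \operatorname{cl}_{\beta Z}(\bigcup\{\tilde f(z) : z \in \beta F\})$ by the defining property of a bright color; consequently $y \notin \tilde f(y)$. Therefore $\tilde f$ is fixed-point free, which completes the argument. I expect no genuine obstacle here: all the substantive work sits in the two cited statements, and the only point requiring attention is verifying that the corollary's standing hypotheses --- local compactness, Lindel\"ofness, and finite dimension --- deliver precisely what those results demand, namely normality of $Z$ for the proposition and finite-dimensionality of the closed subspace $X$ for the theorem.
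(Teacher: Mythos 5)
Your proof is correct and takes essentially the same approach as the paper, which derives the corollary in one line from Theorem~\ref{thm:relativecolorableviaspectra} together with the stated particular case of Proposition~\ref{pro:proposition29}. Your write-up simply makes explicit what the paper leaves tacit: the trivial converse direction and the routine observation that a (bright) coloring of $\tilde f$ rules out fixed points.
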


\par\bigskip\noindent
In the next section when dealing with periodic points we will need the following proposition
which is an easy consequence of the above statement.
\par\bigskip\noindent
\begin{pro}\label{pro:closureofFix}
Let $f:X\to {\rm exp}_k Z$ be 
a continuous map, $Z$ a locally compact  Lindel\"of space of finite dimension, and $X$  closed in $Z$.
Then ${\rm Fix}(\tilde f) =cl_{\beta X}({\rm Fix}(f))$.
\end{pro}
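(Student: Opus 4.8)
The plan is to establish the two inclusions ${\rm Fix}(\tilde f)\supseteq cl_{\beta X}({\rm Fix}(f))$ and ${\rm Fix}(\tilde f)\subseteq cl_{\beta X}({\rm Fix}(f))$ separately. The first is routine: since $\tilde f$ agrees with $f$ on $X$, any fixed point of $f$ is a fixed point of $\tilde f$, so ${\rm Fix}(f)\subseteq{\rm Fix}(\tilde f)$. As observed in the Introduction, the fixed-point set of a continuous map is closed in its domain, so ${\rm Fix}(\tilde f)$ is closed in $\beta X$ and therefore contains $cl_{\beta X}({\rm Fix}(f))$. This handles one direction with no real work.

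For the reverse inclusion I would argue by contradiction, reducing to Corollary \ref{cor:relativecriteriaspectra}. Suppose $\xi\in{\rm Fix}(\tilde f)\setminus cl_{\beta X}(F)$, where $F={\rm Fix}(f)$ is a closed subset of $X$. Since $\beta X$ is compact Hausdorff, hence normal, and $\{\xi\}$ and $cl_{\beta X}(F)$ are disjoint closed sets, I would choose a continuous $g\colon\beta X\to[0,1]$ with $g(\xi)=0$ and $g(cl_{\beta X}(F))\subseteq\{1\}$, and then set $X_{0}=\{x\in X\colon g(x)\le 1/2\}$. This $X_{0}$ is closed in $X$, hence closed in $Z$, and it misses $F$, so the restriction $f|_{X_{0}}\colon X_{0}\to\exp_{k}Z$ is fixed-point free. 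Applying Corollary \ref{cor:relativecriteriaspectra} to $f|_{X_{0}}$ (here $Z$ is locally compact, Lindel\"of, of finite dimension and $X_{0}$ is closed in $Z$) yields that $\widetilde{f|_{X_{0}}}\colon\beta X_{0}\to\exp_{k}(\beta Z)$ is fixed-point free.

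The crux is then to match this up with $\tilde f$. Because $Z$ is normal and $X$, $X_{0}$ are closed in $Z$ (and $X_{0}$ closed in $X$), each is C*-embedded by the Tietze extension theorem, giving $cl_{\beta Z}(X)=\beta X$, $cl_{\beta Z}(X_{0})=\beta X_{0}$, and $cl_{\beta X}(X_{0})=\beta X_{0}$, so that we obtain compatible embeddings $\beta X_{0}\subseteq\beta X\subseteq\beta Z$ in which the meaning of a fixed point is unambiguous. A short density argument places $\xi$ in the right piece: every neighborhood of $\xi$, intersected with the open set $\{g<1/2\}$, meets the dense set $X$ inside $X_{0}$, so $\xi\in cl_{\beta X}(X_{0})=\beta X_{0}$. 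Finally, $\tilde f$ restricted to $\beta X_{0}$ and $\widetilde{f|_{X_{0}}}$ are two continuous extensions of $f|_{X_{0}}$ over $\beta X_{0}$ into the Hausdorff space $\exp_{k}(\beta Z)$, hence they coincide; thus $\tilde f(\xi)=\widetilde{f|_{X_{0}}}(\xi)$, and fixed-point freeness of $\widetilde{f|_{X_{0}}}$ forces $\xi\notin\tilde f(\xi)$, contradicting $\xi\in{\rm Fix}(\tilde f)$.

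I expect the main obstacle to be precisely this last bookkeeping step: verifying that the three compactifications embed compatibly inside $\beta Z$ so that ``$\xi\in\tilde f(\xi)$'' and ``$\xi\in\widetilde{f|_{X_{0}}}(\xi)$'' refer to the same condition, together with confirming that $\xi$ genuinely lands in $\beta X_{0}$. Once the extensions are identified, everything else -- the separating function, the observation that $f|_{X_{0}}$ is fixed-point free, and the appeal to Corollary \ref{cor:relativecriteriaspectra} -- is straightforward.
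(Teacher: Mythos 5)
Your proof is correct and follows essentially the same route as the paper's: both directions reduce to Corollary \ref{cor:relativecriteriaspectra} by restricting $f$ to a closed subset of $X$ that misses ${\rm Fix}(f)$ and whose \v{C}ech--Stone compactification inside $\beta X$ contains the point in question (the paper takes $Y=cl_{\beta Z}(O_x)\cap X$ for a neighborhood $O_x$ with closure missing $cl_{\beta X}({\rm Fix}(f))$, where you take $X_0=\{g\le 1/2\}$ for a Urysohn function --- an inessential difference). Your explicit bookkeeping (that $\xi\in\beta X_0$, and that $\widetilde{f|_{X_0}}$ coincides with $\tilde f|_{\beta X_0}$ by uniqueness of extensions) is exactly what the paper's terse ``Hence, $\tilde f$ does not fix $x$'' leaves implicit.
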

\begin{proof}
The inclusion "$\supset$" is obvious. To prove "$\subset$",
pick any $x\not \in cl_{\beta X}({\rm Fix} (f))$. We need to show that $\tilde f$ does not fix $x$.
For this, let $O_x$ be an open neighborhood of $x$ in $\beta Z$ whose
closure misses $cl_{\beta X}({\rm Fix} (f))$.
Put $Y = cl_{\beta Z}(O_x)\cap X$. Then $f|_Y: Y\to {\exp}_k(Z)$
is fixed-point free. By Corollary \ref{cor:relativecriteriaspectra}, $\tilde f|_{\beta Y}$ is fixed-point free as well.
Hence, $\tilde f$ does not fix $x$. 
\end{proof}

\par\bigskip\noindent
\begin{rem}\label{rem:importanceofrelativity}
Observe that in the proof of Proposition \ref{pro:closureofFix} we need the full version 
of Corollary \ref{cor:relativecriteriaspectra} (that is, including the case of unequal range and domain) even for the case of the proposition when $X=Z$.
\end{rem}

\par\bigskip
In general, colorability of $f:X\to X$ does not apply that $\tilde f: \beta X\to \beta X$
is fixed-point free. If $X$ is normal, however, then the implication is true and is an easy observation
from the fact that if $\mathcal F$ is a coloring of $f$, then so is $\{f^{-1}(F):F\in {\mathcal F}\}$.
For multivalued maps more work is needed for this conclusion and is done in the remaining part of this section.

\par\bigskip\noindent
\begin{lem}\label{lem:goodcolors}
Let $X$ be normal and $f:X\to \exp X$  continuous. If $\mathcal F$ is an $n$-sized coloring of $f$ then
there exists an at most $2^n$-sized bright coloring of $f$. 
\end{lem}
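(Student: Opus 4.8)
The plan is to extract an elementary consequence of the coloring condition, turn it into a supply of bright colors indexed by subsets of $\{1,\dots,n\}$, and then fit finitely many of them together into a cover; the $2^n$ will come from the number of subsets, not from any per-color doubling. Throughout write $\mathcal F=\{F_1,\dots,F_n\}$. First I would record the working reformulation of brightness: a closed $B\subseteq X$ is a bright color exactly when there is an open $W\supseteq B$ with $f(x)\cap W=\emptyset$ for every $x\in B$ (equivalently, $B$ and $cl_X(\bigcup\{f(x):x\in B\})$ are disjoint, and by normality can be separated).

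The key observation is purely formal. For a nonempty $S\subseteq\{1,\dots,n\}$ put $G_S=\bigcap_{i\in S}F_i$. If $x\in G_S$ then $x\in F_i$ for each $i\in S$, so the coloring condition gives $f(x)\cap F_i=\emptyset$ for each such $i$, whence
\[
 f(x)\subseteq\Omega_S:=X\setminus\bigcup_{i\in S}F_i\qquad (x\in G_S).
\]
Thus the images of all points of the closed set $G_S$ land in the open set $\Omega_S$. Setting $O_S:=X\setminus cl_X(\Omega_S)=\operatorname{int}\bigl(\bigcup_{i\in S}F_i\bigr)$, we have $\Omega_S\cap O_S=\emptyset$, so for \emph{every} closed $C\subseteq G_S\cap O_S$ the set $O_S$ witnesses that $C$ is a bright color: $C\subseteq O_S$ and $f(x)\subseteq\Omega_S\subseteq X\setminus O_S$ for all $x\in C$. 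Since there are at most $2^n-1$ nonempty $S$, producing one bright color of this shape per relevant $S$ is what will give the bound $2^n$.

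It remains to make such candidates cover $X$. For $x\in X$ let $S(x)=\{i:x\in F_i\}$, which is nonempty because $\mathcal F$ covers $X$. A short argument gives $x\in O_{S(x)}$: the open set $\bigcap_{j\notin S(x)}(X\setminus F_j)$ contains $x$ and is contained in $\bigcup_{i\in S(x)}F_i$. Combined with $x\in G_{S(x)}$, this shows that the ``atoms'' $E_S=\{x:S(x)=S\}$ partition $X$ and satisfy $E_S\subseteq G_S\cap O_S$. The plan is then to convert this partition into locally closed pieces into a genuine \emph{closed} cover $\{C_S\}$ with $C_S\subseteq G_S\cap O_S$, treating the subsets $S$ in order of decreasing cardinality: any point of $cl_X(E_S)$ lying outside the safe region $G_S\cap O_S$ has strictly larger color-set $S(\cdot)\supsetneq S$ and so belongs to an atom already handled, which lets one shave off an open neighborhood of the higher strata (via the shrinking lemma for finite covers of a normal space) before closing up $E_S$.

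The hard part will be exactly this last step. The two demands on a bright color of index $S$ point in opposite directions: image control forces $C_S\subseteq G_S=\bigcap_{i\in S}F_i$, while safety from the closure forces $C_S\subseteq O_S=\operatorname{int}\bigl(\bigcup_{i\in S}F_i\bigr)$, and the two coincide only on $E_S$ itself. Since $E_S$ need not be closed and $cl_X(E_S)$ may leave $G_S\cap O_S$, one can neither take closures outright nor insist that every point be covered by the color of its own ``home'' index $S(x)$. The technical heart is therefore the normality bookkeeping that simultaneously keeps each $C_S$ inside $G_S\cap O_S$, preserves coverage of the safe points of each stratum, and uses at most one closed piece per nonempty $S$; I expect this inductive separation at the boundaries between strata to be where the real work lies and where the bound $2^n$ is genuinely paid.
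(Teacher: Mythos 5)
Your combinatorial skeleton is exactly the paper's: your ``key observation'' and the stratum analysis (points of $cl_X(E_S)$ escaping the safe region have strictly larger color-set) are precisely Claims 1 and 2 in the paper's induction over subfamilies of $\mathcal F$ of decreasing size, and the count of at most $2^n-1\leq 2^n$ pieces is the same. But the step you defer as ``the hard part'' is not deferred bookkeeping --- as you have specified it, it is impossible. You ask for a closed cover $\{C_S\}$ with $C_S\subseteq G_S\cap O_S$, where $O_S=\operatorname{int}\bigl(\bigcup_{i\in S}F_i\bigr)$. Counterexample: $X=\mathbb R$, $f(x)=\{x+1\}$, $I_j=[3j/4,\,3(j+1)/4]$, and $F_c=\bigcup\{I_j: j\equiv c \pmod 3\}$ for $c=0,1,2$; one checks $F_c\cap(F_c+1)=\emptyset$, so this is a genuine (indeed bright) $3$-coloring. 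Every $x\in(0,3/4)$ has $S(x)=\{0\}$, so $x\in C_S\subseteq G_S$ forces $S=\{0\}$; hence $C_{\{0\}}\supseteq(0,3/4)$ and, being closed, contains $3/4$ --- which is not in $O_{\{0\}}=\operatorname{int}F_0$, since the next interval of $F_0$ starts at $9/4$. Nor can $C_{\{0,1\}}$ absorb the gap, since $G_{\{0,1\}}=F_0\cap F_1$ consists of isolated seam points. So no cover of the shape you demand exists, even though bright colorings abound: a bright color need not be certified safe by containment in $\operatorname{int}\bigl(\bigcup_{i\in S}F_i\bigr)$, and your two requirements (``image control'' inside $G_S$, ``safety'' inside $O_S$) jointly overconstrain the pieces.

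The missing idea --- and the one place where continuity of $f$ is genuinely used, which your argument never invokes beyond the definitions --- is the fattening device the paper imports as \cite[Proposition 2.2]{R2}: in a normal space, every bright color $B$ of a continuous map has an open neighborhood $U$ whose closure is \emph{again a bright color} (separate $B$ from $cl_X(\bigcup\{f(x):x\in B\})$ by open sets with disjoint closures, then use Vietoris-continuity to shrink $U$ so that images of all points of $cl_X(U)$ stay in the far side's closure). The paper runs your decreasing-cardinality induction but \emph{enlarges} rather than shaves: it maintains an open set $O_{k-1}$, the union of fattenings chosen so far; at stage $k$, for each $(n-k)$-sized $\mathcal G\subseteq\mathcal F$ with $\bigcap\mathcal G\setminus O_{k-1}\neq\emptyset$, this closed set is bright by exactly your two observations, and is fattened to an open $U_{\mathcal G}$ whose closure --- which may leave both $G_S$ and $O_S$ --- stays bright. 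The closures $cl_X(U_{\mathcal G})$ form the coloring, and coverage comes free at the singleton stage: any $x$ not yet in the accumulated open set lies in some $F_i$, hence in $U_{\{F_i\}}$. So your setup is correct and is essentially the paper's first half, but the closing step must go through this continuity-based fattening lemma; the invariant you proposed cannot be arranged.
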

\begin{proof} We will construct our coloring inductively.
Let ${\mathcal H}_i$, where $0\leq i<n$, be the set of all subcollections
of $\mathcal F$ of size at least  $(n-i)$.
\par\medskip\noindent
{\it Step 0}: Put $O_0=\emptyset $. 
\par\medskip\noindent 
{\it Assumption}: Assume that  an open set $O_{k-1}\subset X$, where $k-1<n$,  is 
	defined and the following hold:
	\begin{description}
		\item[\rm A1] $\bigcap {\mathcal G}\subset O_{k-1}$ for every ${\mathcal G}\in {\mathcal H}_{k-1}$;
		\item[\rm A2] $cl_X(O_{k-1})$ can be covered by at most $|{\mathcal H}_{k-1}|$ many bright colors.
	\end{description}
	\par\smallskip\noindent
	Before we perform our induction step let us show that A1 and A2 hold for $i=0$. Since 
	${\mathcal H}_0=\{\mathcal F\}$ we only need to  show that
	$\bigcap {\mathcal F} =\emptyset$. For this fix $x\in X$ and $y\in f(x)$. Since $\mathcal F$ is a cover
	there exists $F_y\in {\mathcal F}$ that contains $y$. It suffices to show now that $F_y$ does not contain $x$.
	Since $F_y$ is a color, it misses
	$\bigcup \{f(z):z\in F_y\}$. 
	Since $F_y\cap f(x)\not =\emptyset$ we conclude that $f(x)\not\subset \bigcup\{f(z):z\in F_y\}$, whence
	$x\not\in F_y$. 
	
\par\medskip\noindent 
{\it Step $k< n$}: Let ${\mathcal H}_k'$ be the set of all $(n-k)$-sized subsets of ${\mathcal F}$ such that the following
	hold:
	\par\medskip\noindent
	\begin{description}
		\item [\rm P] $\bigcap {\mathcal G}\setminus O_{k-1}\not = \emptyset$ for every ${\mathcal G}\in {\mathcal H}_k'$.
	\end{description}
	\par\medskip\noindent
	To define $O_k$ let us prove three claims first.
\par\bigskip\noindent
{\it Claim 1.} {\it Let ${\mathcal G}\in {\mathcal H}'_k$. Then 
		$f(\bigcap {\mathcal G})\subset \{S\in \exp X: S\subset \bigcup ({\mathcal F}\setminus {\mathcal G})\}$.}
		
		\par\smallskip\noindent
		 By the definition of color, $f(G)$ misses $\bigcup\{f(x):x\in G\}$
		for every $G\in {\mathcal G}$. Hence $f(G)\subset \{S\in \exp X: S\ misses\ G\}$.
		Therefore, $f(\bigcap {\mathcal G})\subset \{S\in \exp X: S\ misses\ \bigcup {\mathcal G}\}$.
		The last set is a subset of $\{S\in \exp X: S\subset \bigcup ({\mathcal F}\setminus {\mathcal G})\}$, which proves the claim.
		
\par\bigskip\noindent
{\it Claim 2.} {\it Let ${\mathcal G}\in {\mathcal H}'_k$. Then $[\bigcap {\mathcal G}\setminus O_{k-1}]$ misses $\bigcup ({\mathcal F}\setminus {\mathcal G})$.}

\par\smallskip\noindent
 Indeed, if 
		$[\bigcap {\mathcal G}\setminus O_{k-1}]$ had a common point $p$ with some $F\in {\mathcal F}\setminus {\mathcal G}$ then
		${\mathcal G}\cup \{F\}$ would have been in ${\mathcal H}_{k-1}$. Therefore, $p$ would have 
		been in $O_{k-1}$, contradicting $p\in \bigcap {\mathcal G}\setminus O_{k-1}$.
		
\par\bigskip\noindent
{\it Claim 3.} {\it Let ${\mathcal G}\in {\mathcal H}'_k$. Then there exists an open neighborhood $U_{\mathcal G}$ of $[\bigcap {\mathcal G}\setminus O_{k-1}]$ 
		whose closure is a bright color.} 

\par\smallskip\noindent
		By Claim 1, we have 
		$\bigcup\{f(x):x\in[\bigcap {\mathcal G}\setminus O_{k-1}]\}\subset \bigcup ({\mathcal F}\setminus{\mathcal G})$.
		By Claim 2, $[\bigcap {\mathcal G}\setminus O_{k-1}]$ misses $\bigcup ({\mathcal F}\setminus{\mathcal G})$.
		The set $\bigcup ({\mathcal F}\setminus{\mathcal G})$ is closed as the union of finitely many closed sets.
		Therefore, $[\bigcap {\mathcal G}\setminus O_{k-1}]$ misses $cl_X(\bigcup ({\mathcal F}\setminus{\mathcal G}))$
		and consequently $cl_X(\bigcup\{f(x):x\in[\bigcap {\mathcal G}\setminus O_{k-1}]\})$. Therefore,
		$[\bigcap {\mathcal G}\setminus O_{k-1}]$  is a bright color. Proposition \cite[Proposition 2.2]{R2} states that due to normality
		of $X$ any bright color can be placed
		in an open neighborhood whose closure is a bright color as well. Thus,
		a desired  $U_G$ exists, which proves the claim. 
 
\par\bigskip\noindent
	Put $O_k = O_{k-1}\cup [\bigcup \{U_{\mathcal G}:{\mathcal G}\in {\mathcal H}_k'\}]$. Clearly A1 and A2 are satisfied. The inductive
	construction is complete
	
	\par\bigskip\noindent
	The family ${\mathcal F}' = \{\overline U_{\mathcal G}: {\mathcal G}\in {\mathcal H}_k, 0\leq k<n\}$ is a desired bright coloring.

\end{proof}

\par\bigskip\noindent
\begin{cor}\label{cor:colorabilityimpliesfpfextension}
Let $X$ be normal and let $f:X\to {\mathcal K}(X)$ be a colorable continuous map. Then the continuous extension
$\tilde f:\beta X\to {\mathcal K}(\beta X)$ is fixed point free.
\end{cor}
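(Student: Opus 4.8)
The plan is to assemble the two preparatory results of this section into a short chain, with no new hard work required: Lemma~\ref{lem:goodcolors} manufactures a \emph{bright} coloring on $X$, and Proposition~\ref{pro:proposition29} transports it to $\beta X$; fixed-point freeness is then read off directly from the definition of a bright color.

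First I would use the hypothesis that $f$ is colorable to fix a finite coloring $\mathcal F_{0}$ of $f$, say of size $n$. Since $\mathcal K(X)\subseteq \exp X$, the map $f$ may be regarded as a continuous map $f\colon X\to\exp X$, so the hypotheses of Lemma~\ref{lem:goodcolors} are met (recall $X$ is normal). Applying that lemma to $\mathcal F_{0}$ yields a finite bright coloring $\mathcal F$ of $f$, that is, a finite family of bright colors covering $X$. Next I would invoke Proposition~\ref{pro:proposition29} with $Z=X$ (note $X$ is trivially closed in itself and $f\colon X\to\mathcal K(X)$ is continuous): it guarantees that $\{\beta F\colon F\in\mathcal F\}$ is a bright coloring of the extension $\tilde f\colon \beta X\to\mathcal K(\beta X)$. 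In particular this family covers $\beta X$.

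It remains to conclude that a bright coloring of $\tilde f$ forces $\tilde f$ to be fixed-point free. Given any $x\in\beta X$, choose $F\in\mathcal F$ with $x\in\beta F$. By Definition~\ref{defin:brightcolor}, $\beta F$ misses $cl_{\beta X}\bigl(\bigcup\{\tilde f(y)\colon y\in\beta F\}\bigr)$. Since $x\in\beta F$, we have $\tilde f(x)\subseteq\bigcup\{\tilde f(y)\colon y\in\beta F\}$, and $x$ avoids the closure of this union; hence $x\notin\tilde f(x)$. As $\{\beta F\colon F\in\mathcal F\}$ covers $\beta X$, this holds for every $x\in\beta X$, so $\tilde f$ is fixed-point free.

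I do not expect a genuine obstacle here, since all the substantive content is delegated to Lemma~\ref{lem:goodcolors} (whose proof is the real heart of the matter) and Proposition~\ref{pro:proposition29}. The only point demanding care is the final definitional unwinding: one must observe that membership in a bright color already precludes a point from being fixed, because the bright condition bounds not merely $\bigcup\{f(x)\colon x\in F\}$ but its \emph{closure}, which is exactly what is needed after passing to $\beta X$ where $\tilde f(x)$ may meet limit points introduced by the compactification.
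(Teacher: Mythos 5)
Your proposal is correct and follows exactly the paper's route: the paper's own proof is the one-line observation that the corollary follows from Lemma~\ref{lem:goodcolors} and Proposition~\ref{pro:proposition29}, which is precisely the chain you assemble. Your final definitional unwinding (a point in a bright color $\beta F$ cannot lie in $\tilde f(x)$ since $\tilde f(x)$ sits inside the closure that $\beta F$ misses) is the intended, and correct, way to finish.
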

\begin{proof}
The conclusion  follows from Proposition \ref{pro:proposition29} and Lemma \ref{lem:goodcolors}.
\end{proof}

\par\bigskip
In connection with Corollary \ref{cor:colorabilityimpliesfpfextension}, we would like to remark that 
the authors do not know an answer to the following related question:

\par\bigskip\noindent
\begin{que}\label{que:relativenormal}
Is it true that  colorability of $f:X\to {\mathcal K}(Z)$ implies colorability
of $\tilde f:\beta X\to {\mathcal K}({\beta Z})$, where $Z$ is normal and  $X$ is closed in $Z$. 
What if ${\mathcal K}(Z)$ is replaced by $\exp_k Z$? What if ${\mathcal K}(Z)$ is replaced by $Z$?
\end{que}

\par\bigskip\noindent
Observe that if $Z$ is a Lindel\"of locally compact space of finite dimension then the answer is affirmative for the second part of the question
and follows from Theorem \ref{thm:relativecolorableviaspectra}

\par\bigskip\noindent
\section{Periodic points of multivalued maps to Euclidean hyperspaces}\label{S:periodicpoint}

\par\bigskip
In this section we study periodic points of multi-valued maps. First let us remind the definition we use:

\par\bigskip\noindent
\begin{defin}\label{defin:periodatpoint} 
Let $f:X\subset Z\to \exp Z$ be a map and $M$ a positive integer. We say that $f$ has period 
$M$ at $x\in X$ if $M$ is the smallest positive integer such that
there exists a sequence $\langle x_1=x, x_2,..,x_M \rangle$
with the property that $x_{i+1}\in f(x_i)$ for all $1\leq i<M$ and $x\in f(x_M)$.
\end{defin}

\par\bigskip
Before we  dive into multivalued case in our full generaity we would like to present a short and quite transparent argument for
a very specific case of our main result. We will then discuss how this argument can and will be woven into the 
proofs of this section. The specific case we would like to show first is that a continuous map $f:\mathbb R^5\to \mathbb R^5$
has a point of period $3$ if its continuous extension $\tilde f:\beta \mathbb R^5\to \beta\mathbb R^5$ has a point of period $3$.
To prove this, fix $p\in \beta \mathbb R^5$  at which $\tilde f$ has period $3$. We may assume that $p$ is in the remainder of the compactification.
We have neither $\tilde f$ nor $\tilde f\circ \tilde f$ fixes $p$. Therefore we can find an open neighborhood $U$ of $p$ in $\beta \mathbb R^5$
whose closure in $\beta \mathbb R^5$ mises the fixed points of $\tilde f$ and those of $\tilde f\circ \tilde f$. Put $A=cl_{\beta\mathbb R^5}(U)\cap \mathbb R^5$.
We have $(\tilde f\circ\tilde f\circ\tilde f)|_{\beta A}$  fixes $p$. Here we need Theorem \cite[Theorem 3.5]{BC}
stating, in particular, that a continuous map $f$ from a closed subspace $X$ of $\mathbb R^n$ is fixed-point free if and only if $\tilde f:\beta X\to \beta\mathbb R^n$
is fixed-point free. Using this theorem and the fact that 
$(\tilde f)^3|_{\beta A} = \widetilde {(f^3|_A)}$
we conclude that
$f^3|_A$ fixes some $x\in A$. Since, by our choice,  $A$ does not contain any points fixed by $f$ or $f\circ f$ we conclude that
$f$ has period $3$ at $x$, which proves our particular case. 

The convenience of the described situation is that $f^2, f^3$ are defined on all of $\mathbb R^5$ because $f$ is a selfmap. If one wishes to
prove a similar statement with function going from a closed subspace of $\mathbb R^5$ then one immediately encounters a sitation
when $f^2$ may not be defined on a part of the original domain.
If one deals with a function going to $\exp_2 \mathbb R^5$ instead, then $f^2$ (to be defined later) maps $\mathbb R^5$
into $\exp_4 \mathbb R^5$, that is, goes outside of the original range. If one deals both with a smaller domain and hyperspace as the range one faces a bouquet of  problems that need to be
taken care of. We will overcome the difficulties in two stages. First we prove our result for $\mathbb R^n$ and then derive the desired conclusion
for any locally-compact Lindel\"of space of finite dimension. But first we would like to make a remark of advertising nature:

\par\bigskip\noindent
\begin{rem}\label{rem:anotheruseofrelativity}
Observe that in our simple example of selfmap on $\mathbb R^5$ we eventually reached the situation when we had to use the fixed-point free theorem for non-selfmap case, namely, for the case $A\subset \mathbb R^5\to \mathbb R^5$.
\end{rem}

\par\bigskip
To upgrade the just presented argument for a more general case, we need the following 
concept of the multivalued map theory:

\par\bigskip\noindent
\begin{defin}\label{defin:composition}
Let $f: X\subset Z\to \exp Z$ be a map. Define 
$f^1(x) = f(x)$ and $f^{n+1}(x) = \bigcup \{f(y):y\in f^n(x)\}$, provided
the right side is a closed non-empty subset of $Z$.
\end{defin}
\par\bigskip
Clearly, if $f$ maps $X$ into $\exp_k X$ then $f^n$ is defined on all of $X$ (recall we agreed that
$k$ is a positive integer in $\exp_k X$).
The following is a standard fact of the multivalued map theory, which we prove here for the sake
of completeness.

\par\bigskip\noindent
\begin{pro}\label{pro:continuouscomposition} (Folklore) 
Let 
$f: X\to{\exp}_k X$ be continuous. Then $f^n$ is a continuous map defined on the entire $X$ with
range in ${\exp}_{k^n} X$.
\end{pro}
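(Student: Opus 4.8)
The plan is to prove the statement by induction on $n$, reducing the inductive step to the continuity of a single ``union'' operator on hyperspaces together with the folklore fact that $\exp_{m}$ carries continuous maps to continuous maps. The bookkeeping about definedness and cardinality can be dispatched first and independently of continuity.

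First I would settle that $f^{n}$ is defined everywhere and takes values in $\exp_{k^{n}}X$. By induction, $f^{n}(x)$ is nonempty with $|f^{n}(x)|\leq k^{n}$: indeed $f^{n+1}(x)=\bigcup\{f(y):y\in f^{n}(x)\}$ is a union of at most $k^{n}$ sets, each of cardinality at most $k$, hence has at most $k^{n+1}$ points. Since $X$ is $T_{1}$ (as are all spaces considered here), finite nonempty subsets are closed, so each $f^{n}(x)$ is a bona fide element of $\exp_{k^{n}}X$; in particular the right-hand side of Definition \ref{defin:composition} is always a closed nonempty set, and $f^{n}$ is genuinely defined on all of $X$.

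For continuity the base case $n=1$ is the hypothesis. For the inductive step, fix $m=k^{n}$ and factor $f^{n+1}=\Phi_{m}\circ f^{n}$, where $\Phi_{m}\colon\exp_{m}X\to\exp_{mk}X$ is given by $\Phi_{m}(A)=\bigcup\{f(y):y\in A\}$. I would then split $\Phi_{m}=\sigma\circ\exp_{m}f$, where $\exp_{m}f\colon\exp_{m}X\to\exp_{m}(\exp_{k}X)$ sends $A$ to $\{f(y):y\in A\}$ (this is the image of the continuous map $f\colon X\to\exp_{k}X$ under the functor $\exp_{m}$), and $\sigma\colon\exp_{m}(\exp_{k}X)\to\exp_{mk}X$ is the union map $\sigma(\mathcal A)=\bigcup\mathcal A$. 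Then $\Phi_{m}(A)=\sigma(\{f(y):y\in A\})=\bigcup_{y\in A}f(y)$ exactly, and continuity of $f^{n+1}$ will follow from that of $f^{n}$ once $\sigma$ and $\exp_{m}f$ are shown continuous. Both of the remaining claims I would verify on the Vietoris subbasis, whose members are $U^{+}=\{B:B\subset U\}$ and $U^{-}=\{B:B\cap U\neq\emptyset\}$ for $U$ open (recall $\langle U_{1},\dots,U_{r}\rangle=(U_{1}\cup\dots\cup U_{r})^{+}\cap\bigcap_{i}U_{i}^{-}$). For the functor, a continuous $g$ satisfies $(\exp_{m}g)^{-1}(U^{+})=(g^{-1}(U))^{+}$ and $(\exp_{m}g)^{-1}(U^{-})=(g^{-1}(U))^{-}$, both open, giving continuity of $\exp_{m}f$. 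For the union map, $\sigma^{-1}(U^{+})=\{\mathcal A:A\subset U\text{ for all }A\in\mathcal A\}=(U^{+})^{+}$ and $\sigma^{-1}(U^{-})=\{\mathcal A:A\cap U\neq\emptyset\text{ for some }A\in\mathcal A\}=(U^{-})^{-}$, which are subbasic open sets of $\exp_{m}(\exp_{k}X)$ because $U^{+}$ and $U^{-}$ are open in $\exp_{k}X$; hence $\sigma$ is continuous.

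The only genuinely delicate point, and thus the main obstacle, is the continuity of the union map $\sigma$ (the ``multiplication'' of the hyperspace monad) and, relatedly, pinning down the composition $f^{n+1}=\sigma\circ\exp_{m}f\circ f^{n}$ and the cardinality bound $\exp_{m}(\exp_{k}X)\to\exp_{mk}X$ precisely. Once these identifications are correct, everything collapses to the two one-line subbasis computations above, and the induction closes, yielding that $f^{n}$ is continuous with range in $\exp_{k^{n}}X$.
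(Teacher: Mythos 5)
Your proof is correct, but it takes a genuinely different route from the paper's. The paper gives a direct, self-contained neighborhood chase: given $x$ and a neighborhood of $f^{n}(x)$, it may be assumed basic of the form $\langle W_{z}\colon z\in f^{n}(x)\rangle$ (using finiteness of $f^{n}(x)$); continuity of $f$ then produces, for each $y\in f^{n-1}(x)$, an open $V_{y}$ with $f(V_{y})\subset\langle W_{z}\colon z\in f(y)\rangle$, and continuity of $f^{n-1}$ produces $U$ with $f^{n-1}(U)\subset\langle V_{y}\colon y\in f^{n-1}(x)\rangle$, whence $f^{n}(U)\subset\mathcal W$. You instead factor $f^{n+1}=\sigma\circ\exp_{k^{n}}f\circ f^{n}$ and reduce everything to two structural facts verified on the subbasis $\{U^{+},U^{-}\}$: the hyperspace functor preserves continuity (via $(\exp_{m}g)^{-1}(U^{+})=(g^{-1}(U))^{+}$ and $(\exp_{m}g)^{-1}(U^{-})=(g^{-1}(U))^{-}$) and the union map $\sigma$ is continuous (via $\sigma^{-1}(U^{+})=(U^{+})^{+}$ and $\sigma^{-1}(U^{-})=(U^{-})^{-}$); all four identities check out. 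What your decomposition buys is conceptual clarity and reusability: the continuity of the monad multiplication $\sigma$ and of $\exp_{m}f$ are general-purpose lemmas, and your explicit treatment of definedness (finite sets are closed in $T_{1}$ spaces, so the right-hand side of the defining formula is always a legitimate element of $\exp_{k^{n}}X$) is more careful than the paper's, which dismisses this as clear. One point worth making explicit in your writeup: the restriction to the finite-set hyperspaces is exactly what makes $\sigma$ well defined without passing to closures, since a union of at most $m$ closed sets is closed, whereas the union map on the full $\exp(\exp X)$ would require a closure and a more delicate continuity argument. The paper's argument is shorter and needs no auxiliary maps, but proves nothing beyond the statement at hand.
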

\begin{proof}
Assume $f^{n-1}$ is continuous. To prove that $f^n$ is continuous, fix $x\in X$ and an open neighborhood
$\mathcal W$ of $f^n(x)$ in ${\rm exp} X$. By the definition of $f^n$, we have 
$f^n(x) = \bigcup \{f(y):y\in f^{n-1}(x)\}$. Since $|f^{n}(x))|$ is finite, we may assume that
${\mathcal W}=\langle W_{z}: z\in f^{n}(x)\rangle$, where $W_{z}$ is an open neighborhood
of $z$ in $X$. By continuity of $f$, for each $y\in f^{n-1}(x)$, we can find open $V_y$ in $X$
such that $f(V_y)\subset \langle W_{z}: z\in f(y)\rangle$. Put ${\mathcal V}= \langle V_y:y\in f^{n-1}(x) \rangle$.
Clearly, $\mathcal V$ is an open neighborhood of $f^{n-1}(x)$ in ${\rm exp } X$.
By continuity of $f^{n-1}$, we can find an open neighborhood $U$ of $x$ such that
$f^{n-1}(U)\subset {\mathcal V}$. It is clear that 
$f^n(U)\subset {\mathcal W}$. 
\end{proof}
\par\bigskip\noindent
The next statement is also a standard fact and is corollary to the fact that the maps whose equality
is to be proved
coincide on $X$, which is dense in $\beta X$. Recall, we agreed that given $f:X\subset Z\to \exp_k Z$,
by $\tilde f$ we denote the continuous map from $\beta X$ to $\exp_k \beta Z$ that coincides with
$f$ on $X$.
\par\bigskip\noindent
\begin{pro}\label{pro:commute} 
Let $g: X\to {\rm exp}_k  X$ be continuous.
Then $\widetilde {(g^m)}=\tilde g^m$.
\end{pro}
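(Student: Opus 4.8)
The plan is to prove $\widetilde{(g^m)} = \tilde g^{\,m}$ by induction on $m$, the base case $m=1$ being trivial since both sides equal $\tilde g$ by definition of the extension. For the inductive step one assumes $\widetilde{(g^{m-1})} = \tilde g^{\,m-1}$ and must show $\widetilde{(g^m)} = \tilde g \circ \tilde g^{\,m-1}$ in the appropriate sense, i.e. that the two continuous maps $\beta X \to \exp_{k^m}(\beta X)$ agree. By Proposition \ref{pro:continuouscomposition}, $g^m$ is a continuous map from $X$ into $\exp_{k^m} X$, so $\widetilde{(g^m)}$ is a well-defined continuous map $\beta X \to \exp_{k^m}(\beta X)$; similarly, the iterated composition of the extension $\tilde g \colon \beta X \to \exp_k(\beta X)$ produces a continuous map $\tilde g^{\,m} \colon \beta X \to \exp_{k^m}(\beta X)$ by the same folklore argument (or rather, $\tilde g^{\,m}$ is obtained by applying Definition \ref{defin:composition} to $\tilde g$, noting $\beta X$ is compact so the $\exp$-valued composition is always defined).

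The key observation, as the excerpt itself signals, is that both $\widetilde{(g^m)}$ and $\tilde g^{\,m}$ restrict to $g^m$ on the dense subset $X \subset \beta X$: the first by definition of the tilde extension, the second because $\tilde g|_X = g$ forces $(\tilde g \circ \cdots \circ \tilde g)|_X = g \circ \cdots \circ g = g^m$ on points of $X$ (here one uses that $g(x) \subset X$ so the composition never leaves $X$, and that the union defining $g^m$ on $X$ matches the union defining $\tilde g^{\,m}$ restricted to those finite subsets). Two continuous maps into a Hausdorff space that agree on a dense subset are equal; $\exp_{k^m}(\beta X)$ is Hausdorff (indeed compact metrizable-like behaviour is not needed, just Hausdorffness of the Vietoris hyperspace of a Hausdorff, even compact Hausdorff, space), so the two maps coincide on all of $\beta X$. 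This completes the induction.

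The only place requiring a little care — and what I would flag as the main obstacle — is verifying that $\tilde g^{\,m}|_X = g^m$ genuinely holds, i.e. that forming the hyperspace composition and then restricting commutes with restricting first and then composing. This amounts to checking that for $x \in X$ one has $\tilde g^{\,m}(x) = g^m(x)$, which follows by an inner induction: $\tilde g^{\,n+1}(x) = \bigcup\{\tilde g(y) : y \in \tilde g^{\,n}(x)\} = \bigcup\{g(y) : y \in g^n(x)\} = g^{n+1}(x)$, using $\tilde g^{\,n}(x) = g^n(x) \subset X$ from the inductive hypothesis and $\tilde g|_X = g$. One should also note $\exp_{k^m}(\beta X)$ is Hausdorff so that agreement on a dense set yields global agreement; this is standard for Vietoris hyperspaces of Hausdorff spaces. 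With these routine verifications in hand, the density argument closes the proof, and no deeper machinery (spectra, colorings) is needed here.
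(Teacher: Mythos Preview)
Your proposal is correct and follows exactly the approach the paper indicates: both maps are continuous into the Hausdorff space $\exp_{k^m}(\beta X)$ and agree on the dense subset $X$, hence coincide. The paper states only this one-line justification; your additional care in verifying $\tilde g^{\,m}|_X = g^m$ by an inner induction and in noting Hausdorffness of the Vietoris hyperspace simply fills in the routine details the paper leaves implicit.
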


\par\bigskip
The next statement is a self-obvious fact and we will state it  without a proof and
use it without reference. 

\par\bigskip\noindent
\begin{rem}\label{rem:mperiodicpointfixedbyfm}
Let $f:X\subset Z\to {\rm exp} Z$ be a map and $M$ a positive integer. Then $f$ has period $M$ at $x\in X$
if and only if $f^M$ fixes $x$ and $f^K$ does not for $K<M$.
\end{rem}

\par\bigskip
Our next statement allows us to reduce the case "$X\subset \mathbb R^n$" to
"$X= \mathbb R^n$". To prove the statement we need the theorem of Jaworowski that if a metric compactum $C$ is an absolute retract then so
is $\exp_k C$ (a proof can also be found in \cite{F}). Since $\exp_k \mathbb R^n$ embeds into $\exp_k [0,1]^n$ as an open subspace,
$\exp_k \mathbb R^n$ is an absolute extensor - the property we will use.

\par\bigskip\noindent
\begin{pro}\label{pro:goodextension}
Let $X$ be a closed subspace of $\mathbb R^{n-1}\times \{0\}$ and
$f:X\to {\rm exp}_k(\mathbb R^{n-1}\times \{0\})$ a continuous map.
Then there exists a continuous extension $g:\mathbb R^n\to {\exp}_k \mathbb R^n$ of $f$
such that the following hold:
\begin{enumerate}
	\item $\tilde g(y)$ misses $\beta X$ if $y\not\in \beta X$; and
	\item For $x\in \beta X$ and a positive integer $M$, the map $\tilde f$ has period $M$ at $x$ 
	if and only if $\tilde g$ has period $M$ at $x$.
\end{enumerate}
\end{pro}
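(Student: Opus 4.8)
The plan is to build $g$ by first extending $f$ and then pushing the values off the hyperplane $H=\mathbb R^{n-1}\times\{0\}$ into the new coordinate direction, the amount of the push being the distance to $X$. Since $H\cong\mathbb R^{n-1}$, the space $\exp_k H$ is an absolute extensor (by the Jaworowski argument recorded before the statement), so the map $f\colon X\to\exp_k H$ defined on the closed set $X$ extends to a continuous $\bar f\colon\mathbb R^n\to\exp_k H$. Let $\kappa(y)=\operatorname{dist}(y,X)$, a continuous function with $\kappa^{-1}(0)=X$, and define
\[
 g(y)=\{(v,\kappa(y))\colon (v,0)\in\bar f(y)\},
\]
that is, $\bar f(y)\subset H$ raised to height $\kappa(y)$. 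The raising operation $\exp_k H\times[0,\infty)\to\exp_k\mathbb R^n$ is Vietoris-continuous and cardinality-preserving, so $g$ is a well-defined continuous map into $\exp_k\mathbb R^n$; since $\kappa|_X\equiv 0$ we get $g|_X=f$. As $g$ is valued in $\exp_k\mathbb R^n$, Proposition \ref{pro:continuouscomposition} makes every iterate defined on all of $\mathbb R^n$, and by density of $X$ in $\beta X$ the restriction $\tilde g|_{\beta X}$ is the unique continuous extension of $g|_X=f$, i.e.\ $\tilde g|_{\beta X}=\tilde f$ (with values in $\exp_k\beta H$).

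For condition (1) the point of the construction is that every point of $g(y)$ has last coordinate exactly $\kappa(y)$. Composing the last-coordinate projection with a homeomorphism $\sigma\colon\mathbb R\to(-1,1)$ gives a bounded $\eta$ with $\eta^{-1}(0)=H$, and $\exp_k\hat\eta\circ\tilde g$ extends the singleton-valued map $y\mapsto\{\theta(y)\}$ with $\theta(y)=\sigma(\kappa(y))$; hence it equals $p\mapsto\{\hat\theta(p)\}$, so all points of $\tilde g(p)$ share one extended last coordinate $\hat\theta(p)$. If $\hat\theta(p)\neq0$ then $\tilde g(p)$ lies off $\beta H\supseteq\beta X$ and there is nothing to prove. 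The essential case, and the main obstacle, is a point $p\notin\beta X$ tangent to $X$ at infinity, i.e.\ with an approximating net $y_\nu\to p$ and $\kappa(y_\nu)\to0$, so that $\hat\theta(p)=0$ yet $p$ still escapes $\beta X$. Since $p\notin\beta X$ the $y_\nu$ eventually avoid $X$, hence $\kappa(y_\nu)>0$ and each point of $g(y_\nu)$ has strictly positive last coordinate. The key lemma I must prove is that a point of $\beta\mathbb R^n$ approached by such strictly-positive-height points is separated from $\beta X$: shrinking bumps of radius $\kappa(y_\nu)/2$ centred at those image points avoid $H$, hence vanish on $X$, while being $1$ at the image points, so the resulting bounded function separates $\tilde g(p)$ from $\beta X$. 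Carrying out this separation at the remainder of $\beta\mathbb R^n$ — and verifying that the bump function is genuinely continuous (locally finite) along the net — is where the real work lies.

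Finally, condition (2) will follow formally from (1) together with $\tilde g|_{\beta X}=\tilde f$. Fix $x\in\beta X$ and suppose $x=x_1,\dots,x_M$ is a $\tilde g$-orbit returning to $x$, so $x_{i+1}\in\tilde g(x_i)$ and $x\in\tilde g(x_M)$. Because $\tilde g(x_M)$ then meets $\beta X$ (it contains $x$), condition (1) forces $x_M\in\beta X$; then $\tilde g(x_{M-1})\ni x_M\in\beta X$ forces $x_{M-1}\in\beta X$, and inductively every $x_i\in\beta X$. On $\beta X$ we have $\tilde g=\tilde f$, so this is precisely a returning $\tilde f$-orbit, and conversely every returning $\tilde f$-orbit at $x$ stays in $\beta X$ and is a returning $\tilde g$-orbit. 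Thus $\tilde f$ and $\tilde g$ admit returning orbits at $x$ of exactly the same lengths, and taking the minimal length (Remark \ref{rem:mperiodicpointfixedbyfm}) shows that $\tilde f$ has period $M$ at $x$ if and only if $\tilde g$ does. I expect the only friction in this last step to be the bookkeeping for finite points of $X$, where the backward tracing must also invoke that an infinite image point can only arise from an infinite argument; the substantive difficulty remains the separation estimate for condition (1) at infinity.
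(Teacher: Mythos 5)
Your construction is essentially the paper's, and so is your proof of (2): the paper also extends $f$ over the hyperplane $H$ by the absolute-extensor property and then raises $h$ of the projection to height $x(n)+\operatorname{dist}(\pi_{\mathbb R^{n-1}}(x),X)$, and its proof of (2) is exactly your backward tracing (``if one element of the orbit falls outside $\beta X$, so do all the following ones''). Your variant $\kappa(y)=\operatorname{dist}_{\mathbb R^n}(y,X)$ is in fact tidier than the paper's height, which can vanish at points with $x(n)<0$. The genuine gap is precisely where you located the ``real work'': the key separation lemma for (1) at remainder points is not proved in your proposal, and, as sketched, it is false. The bumps of radius $\kappa(y_\nu)/2$ centred at the image points have radii shrinking to $0$ while their centres may accumulate at a \emph{finite} point of $X$; the resulting sum is $0$ at that point but equals $1$ arbitrarily nearby, so it is not continuous, and no choice of bumps can help because statement (1) itself fails at the remainder for \emph{every} continuous extension of some $f$. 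Concretely, take $n=2$, $k=1$, $X=\{(m,0)\colon m\in\mathbb N\}$ and $f\equiv\{(1,0)\}$. For any continuous $g\colon\mathbb R^2\to\exp_1\mathbb R^2$ extending $f$, continuity at $(m,0)$ yields $\delta_m>0$ with $g(B((m,0),\delta_m))\subset\langle B((1,0),1/m)\rangle$; pick $z_m\in B((m,0),\min(\delta_m,1/2))\setminus X$. Then $\{z_m\colon m\in\mathbb N\}$ is closed in $\mathbb R^2$ and disjoint from $X$, so by normality its closure in $\beta\mathbb R^2$ misses $\beta X$; yet along any net from this set converging to a remainder point $p$ we get $g(z_{m_\nu})\subset B((1,0),1/m_\nu)$, hence $\tilde g(p)=\{(1,0)\}$, which meets $\beta X$ even though $p\notin\beta X$.

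So your proposal is incomplete at its acknowledged crux, and the lemma you propose to supply cannot be true as stated. It is only fair to add that the paper fares no better on this point: its proof of (1) consists of the single remark that the lifting ``guarantees part 1 of the conclusion of the proposition and continuity,'' which covers the finite points $y\in\mathbb R^n\setminus X$ (where $\kappa(y)>0$ pushes $g(y)$ off $H\supset X$) but offers no argument at the remainder, where the above example shows the claim can fail. Your analysis via $\hat\theta$ correctly disposes of all $p$ with $\hat\theta(p)\neq 0$; the case $\hat\theta(p)=0$, which you flagged as the main obstacle, is a real obstruction rather than a technical verification, and any rescue of the proposition (and of its use in Theorem \ref{thm:periodicpoint}) would have to either weaken (1) to the points actually reachable by orbits starting in $\beta X$ or impose additional hypotheses on $f$.
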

\begin{proof}
Let $h$ be a continuous extension of $f$ over $\mathbb R^{n-1}\times \{0\}$ to
${\exp}_k(\mathbb R^{n-1}\times \{0\})$. This extension exists since $\exp_k (\mathbb R^n\times \{0\})$ is an absolute
extensor. Define $g:\mathbb R^n\to {\rm exp}_k(\mathbb R^n)$
as follows: 
$$
g(\langle x(1),...,x(n)\rangle) =\{\langle y(1),...,y(n-1), x(n)+dist(\pi_{\mathbb R^{n-1}}(x), X) \rangle :
$$
$$
\langle y(1),...,y(n-1), 0\rangle\in h(\langle x(1),...,x(n-1),0\rangle)\},
$$
where $\pi_{\mathbb R^{n-1}}$  denotes the projection of $\mathbb R^{n-1}\times \mathbb R$ onto $\mathbb R^{n-1}$-axis.
In words, $g$ coincides with $h$, and consequently with $f$, on $X$. At other points it first  acts as $h$
relative to the $\mathbb R^{n-1}$-axis and then slightly moves the "$h$"-image of each point  along the $\mathbb R$-axis (the n-th axis in $\mathbb R^n$), 
which guarantees part
1 of the conclusion of the proposition and continuity.

To prove part 2 of the conclusion, fix $x\in \beta X$. Since $\tilde g$ extends
$\tilde f$,  the period of $\tilde g$ at $x$ cannot exceed
that of $\tilde f$ at $x$. Next we assume that $\tilde g$ has period $M$ at $x$ and we need to show
that the period of $\tilde f$ at $x$ is $M$ as well. For this let
$\langle x_1, ..., x_M\rangle$ be a sequence of elements of $\beta\mathbb R^n$ witnessing the periodicity $M$
of $\tilde g$ at $x$. By part 1, if at least one element of the sequence falls outside of $\beta X$ so do
the rest of the elements and consequently $x$ cannot be in $\tilde g(x_M)$. Therefore, all elements
of the sequence are in $\beta X$, which demonstrates the desired conclusion.
\end{proof}

\par\bigskip\noindent
\begin{thm}\label{thm:periodicpoint}
Let $f:X\to {\exp}_k \mathbb R^n$ be 
a continuous map, where $X$ is closed in $\mathbb R^n$.
Then $f$ has a point of period $M$ if and only if $\tilde f:\beta X\to {\ exp}_k \mathbb R^n$ has.
\end{thm}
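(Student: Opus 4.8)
The plan is to reduce the assertion to a self-map and then to compare the period-$M$ points of $g$ and of $\tilde g$ by way of the fixed-point sets of the iterates $g^K$. The obstruction noted before Definition~\ref{defin:composition} is that when $X\subsetneq\mathbb R^n$ the iterate $f^2$ need not be defined on all of $X$, so $f^M$ is not available as a global tool; I would dispose of it using Proposition~\ref{pro:goodextension}. Regarding $\mathbb R^n$ as $\mathbb R^n\times\{0\}\subset\mathbb R^N$ with $N=n+1$, that proposition supplies a self-map $g\colon\mathbb R^N\to\exp_k\mathbb R^N$ extending $f$ and matching periods: $\tilde f$ has a period-$M$ point exactly when $\tilde g$ has one in $\beta X$, and $f$ has one exactly when $g$ does, its part~(1) being precisely what prevents a periodic orbit meeting $X$ (or $\beta X$) from escaping. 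It therefore suffices to show, for the self-map $g$, that $g$ has a concrete period-$M$ point if and only if $\tilde g$ has one in $\beta X$, and then to transport the conclusion back to $f$ and $\tilde f$ through Proposition~\ref{pro:goodextension}.

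For the self-map $g$ on $\mathbb R^N$, Proposition~\ref{pro:continuouscomposition} makes every iterate $g^K\colon\mathbb R^N\to\exp_{k^K}\mathbb R^N$ a continuous map on the whole space, and $\mathbb R^N$ is locally compact, Lindel\"of and finite-dimensional. Hence Proposition~\ref{pro:closureofFix} applies to each $g^K$, and together with the identity $\widetilde{g^K}=\tilde g^K$ from Proposition~\ref{pro:commute} it yields the relation I would lean on throughout,
\[
\operatorname{Fix}(\tilde g^{K})=\operatorname{cl}_{\beta\mathbb R^{N}}\bigl(\operatorname{Fix}(g^{K})\bigr),\qquad 1\le K\le M.
\]
By Remark~\ref{rem:mperiodicpointfixedbyfm}, having period exactly $M$ means lying in $\operatorname{Fix}(g^{M})$, respectively $\operatorname{Fix}(\tilde g^{M})$, but in none of the sets $\operatorname{Fix}(g^{K})$, respectively $\operatorname{Fix}(\tilde g^{K})$, with $K<M$.

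The forward implication is then immediate: a concrete period-$M$ point of $g$, which by Proposition~\ref{pro:goodextension} may be taken to be a point $x\in X$, lies in $\operatorname{Fix}(g^{M})\subset\operatorname{Fix}(\tilde g^{M})$, and since each $\operatorname{Fix}(g^{K})$ is closed in $\mathbb R^{N}$ we have $\operatorname{Fix}(\tilde g^{K})\cap\mathbb R^{N}=\operatorname{Fix}(g^{K})$, so $x\in\beta X$ avoids $\operatorname{Fix}(\tilde g^{K})$ for $K<M$ and is a period-$M$ point of $\tilde g$. The converse is the heart of the matter and the step I expect to be the main obstacle. Given a period-$M$ point $p$ of $\tilde g$ taken in $\beta X$, the displayed relation puts $p$ in $\operatorname{cl}_{\beta\mathbb R^{N}}(\operatorname{Fix}(g^{M}))$ yet outside $\operatorname{cl}_{\beta\mathbb R^{N}}(\operatorname{Fix}(g^{K}))$ for every $K<M$. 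The finite union $\bigcup_{K<M}\operatorname{cl}_{\beta\mathbb R^{N}}(\operatorname{Fix}(g^{K}))$ is closed and misses $p$, so, $\beta\mathbb R^{N}$ being compact Hausdorff and hence normal, I can pick an open $U\ni p$ whose closure avoids that union; since $p$ lies in the closure of $\operatorname{Fix}(g^{M})$, the set $U$ meets $\operatorname{Fix}(g^{M})$ in a point $x\in\mathbb R^{N}$. This $x$ is fixed by $g^{M}$ but by no lower iterate, hence has period exactly $M$ under $g$, and projecting its orbit back as in Proposition~\ref{pro:goodextension} yields a period-$M$ point of $f$. The delicate maneuver here, the one deserving the most care in a full write-up, is exactly this separation-and-extraction that manufactures a concrete periodic point of the correct period from the abstract point $p$ in the remainder: it works only because Proposition~\ref{pro:closureofFix} controls the closure of each $\operatorname{Fix}(g^{K})$ individually, so that the lower-period sets stay away from $p$ while the period-$M$ set accumulates at it.
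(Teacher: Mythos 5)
Your proposal is correct and follows essentially the same route as the paper's proof: reduction to a self-map $g$ via Proposition~\ref{pro:goodextension}, then Propositions~\ref{pro:continuouscomposition}, \ref{pro:commute} and \ref{pro:closureofFix} to identify $\operatorname{Fix}(\tilde g^{K})$ with $\operatorname{cl}_{\beta\mathbb R^{N}}(\operatorname{Fix}(g^{K}))$, and a separation argument in the normal space $\beta\mathbb R^{N}$ that extracts a concrete point fixed by $g^{M}$ but by no lower iterate. The only, inessential, difference is that you invoke Proposition~\ref{pro:closureofFix} globally for each iterate of the self-map, whereas the paper localizes first, choosing a neighborhood $O_{z}$ whose closure misses $\operatorname{Fix}(g^{m})$ for $m<M$ and applying Proposition~\ref{pro:closureofFix} to the restriction $g^{M}|_{Y}$ with $Y=\operatorname{cl}_{\beta\mathbb R^{n}}(O_{z})\cap\mathbb R^{n}$; both versions implement the same separation-and-extraction idea.
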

\begin{proof}
Necessity is obvious. To prove sufficiency we may assume that $n$-th coordinate of each
point of $X$ is $0$ and the range of $f$ 
is ${\exp}_k \mathbb ( R^{n-1}\times \{0\})$ (this assumption can always be achieved by appropriately placing
the domain space into a higher-dimensional  Euclidean space). Fix $z\in \beta X$ at which $\tilde f$
has period $M$. We may assume that $z\in \beta X\setminus X$. 
We have $\tilde f^m$ does not fix $z$ for $m<M$.
Let $g$ be a continuous extension of $f$ that satisfies the conclusion of Proposition \ref{pro:goodextension}.
By part 2 of Proposition \ref{pro:goodextension},
$\tilde g^m$ does not fix $z$ either if $m<M$. Therefore, there exists an open neighborhood $O_z$ of $z$ in $\beta \mathbb R^n$
whose closure does not meet ${\rm Fix}(\tilde g^m)$ for every $m<M$.
By Proposition \ref{pro:commute}, $cl_{\beta \mathbb R^n}(O_z)$ misses ${\rm Fix}(\widetilde {(g^m)})$ for
every $m<M$. Therefore, $cl_{\beta\mathbb R^n}(O_z)$ misses ${\rm Fix}(g^m)$
for every $m<M$.
Put $Y = cl_{\beta\mathbb R^n}(O_z)\cap \mathbb R^n$. Since $\tilde g^M =\widetilde {(g^M)}$ (Proposition \ref{pro:commute}),
we conclude that $\widetilde {(g^M)}|_{\beta Y}$ fixes $z$. Therefore, by Proposition \ref{pro:closureofFix},
$z\in cl_{\beta\mathbb R^n}({\rm Fix}(g^M|_Y))$. Select any
$y\in {\rm Fix}(g^M|_Y)$. By the choice of $O_z$, we have $y\not \in {\rm Fix}(g^m)$ for every $m<M$.
Therefore, $g$ has period $M$ at $y$. Since $y\in X$, applying part 2 of Proposition \ref{pro:goodextension},
we conclude that $f$ has period $M$ at $y$.
\end{proof}

\par\bigskip
Next we will show that the previous result holds if we replace $\mathbb R^n$ by a locally compact
Lindel\"of space of finite dimension. Our proof will be derived from $\mathbb R^n$-version of the statement.
To shorten our arguments we introduce the following concept:

\par\bigskip\noindent
\begin{defin}\label{defin:Nbrightcolor}
A color $F$ of $f:X\subset Z\to \exp_k Z$ that misses
$cl_Z(\bigcup f^n(F))$ for all $n\leq N$ is an  $N$-bright color of $f$.
\end{defin}

\par\bigskip\noindent
\begin{lem}\label{lem:verygoodneighborhood}
Let $Z$ be normal; $X$ a closed subspace of $Z$; and $f:X\to \exp_k Z$ a continuous map without
points of period less than or equal to $N$. Then for any $x\in X$ one can find an open neighborhood
$U$ of $x$ in $Z$ such that $cl_Z(U)$ is an $N$-bright color of $f$.
\end{lem}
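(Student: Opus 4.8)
The plan is to produce a single open neighborhood $U$ of $x$ whose closure is separated, simultaneously for every $n\le N$, from the closure of the set of endpoints of length-$n$ $f$-trajectories issuing from $cl_Z(U)\cap X$; the desired $N$-bright color will then be $cl_Z(U)\cap X$ (and in fact the required disjointness will hold for all of $cl_Z(U)$). First I would rewrite the hypothesis. Writing $f^n(x)$ for the finite (at most $k^n$-point) set of endpoints of length-$n$ $f$-trajectories that start at $x$ and keep all intermediate points in $X$, the assumption that $f$ has no point of period $\le N$ means, via Remark \ref{rem:mperiodicpointfixedbyfm}, that no iterate $f^m$ with $m\le N$ fixes any point; in particular $x\notin f^n(x)$ for every $n\le N$.

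The technical heart of the argument is an upper semicontinuity statement for the $n$-step image map, which I would prove by induction on $n$ in the spirit of Proposition \ref{pro:continuouscomposition}: for every open $B\subseteq Z$ with $f^n(x)\subset B$ there is an open $W\ni x$ in $Z$ such that $\bigcup\{f^n(y):y\in W\cap X\}\subset B$. The case $n=1$ is just continuity of $f$. For the inductive step I would, using continuity of $f$, choose for each $w\in f^{n-1}(x)\cap X$ an open $V_w\ni w$ with $f(V_w\cap X)\subset B$, set $V=\bigcup_w V_w$, and then observe that, because $X$ is closed, the set $B'=V\cup(Z\setminus X)$ is open and contains $f^{n-1}(x)$ (its points lying in $X$ are in $V$, the others in $Z\setminus X$); the inductive hypothesis applied to $B'$ supplies the required $W$. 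The key point, which I expect to be the main obstacle, is exactly this: since $f$ is defined only on the closed subspace $X$, the naive iterate of $f$ need not be everywhere defined, and branches of trajectories that leave $X$ must be kept from propagating. Absorbing those branches into the open complement $Z\setminus X$ is what keeps the induction honest and confines $f^n$ to the branches that remain in $X$.

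Once this semicontinuity is available, the remainder is routine separation using normality. For each $n\le N$, since $x\notin f^n(x)$ and $f^n(x)$ is a finite, hence closed, subset of the normal space $Z$, normality (Urysohn) yields open sets $A_n\ni x$ and $B_n\supset f^n(x)$ with $cl_Z(A_n)\cap cl_Z(B_n)=\emptyset$; the semicontinuity statement then gives an open $W_n\ni x$ with $\bigcup\{f^n(y):y\in W_n\cap X\}\subset B_n$. I would finally use normality once more to choose an open $U\ni x$ with $cl_Z(U)\subset\bigcap_{n=1}^N (A_n\cap W_n)$.

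To verify the conclusion, fix $n\le N$. Every $y\in cl_Z(U)\cap X$ lies in $W_n\cap X$, so $f^n(y)\subset B_n$; hence $\bigcup\{f^n(y):y\in cl_Z(U)\cap X\}\subset B_n$ and $cl_Z\big(\bigcup f^n(cl_Z(U)\cap X)\big)\subset cl_Z(B_n)$. As $cl_Z(U)\subset cl_Z(A_n)$ is disjoint from $cl_Z(B_n)$, the set $cl_Z(U)$ misses this closure. Taking $n=1$ shows in addition that $cl_Z(U)\cap X$ is a color, and the cases $n\le N$ give precisely the condition of Definition \ref{defin:Nbrightcolor}. Thus $cl_Z(U)$ is an $N$-bright color of $f$, as required.
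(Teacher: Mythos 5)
Your proof is correct, and its engine is the same as the paper's --- finiteness of the sets $f^n(x)$ for $n\le N$, Vietoris continuity to spread control of images from points to neighborhoods, and normality to separate $cl_Z(U)$ from the image sets --- but you organize it genuinely differently, and the difference is worth recording. The paper's proof (written out only for $N=2$ ``for simplicity'') interleaves propagation and separation, building neighborhoods by hand backwards along the trajectory tree: $U_z\ni z$ for $z\in f^2(x)$ with $x\notin cl_Z(U_z)$, then $U_y\ni y$ for $y\in f(x)$ with $f(U_y)\subset\langle U_z: z\in f(y)\rangle$, and finally $U\ni x$ with $f(U)\subset\langle U_y: y\in f(x)\rangle$ and $cl_Z(U)$ missing every $cl_Z(U_y)$. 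You instead isolate the propagation into a single reusable upper semicontinuity lemma for the $n$-step image map, proved by induction in the spirit of Proposition~\ref{pro:continuouscomposition}, and then perform all $N$ separations uniformly at the end. This buys rigor at two points the paper elides. First, since $f$ is defined only on the closed set $X$, iterates are partial and branches of a trajectory may exit $X$; your enlargement $B'=V\cup(Z\setminus X)$ --- open because $X$ is closed, and harmless because $B'\cap X=V\cap X$ --- is exactly what confines the induction to the surviving branches, whereas the paper's condition $f(U_y)\subset\langle U_z:z\in f(y)\rangle$ is not even meaningful at points $y\in f(x)\setminus X$, where $f$ and hence its continuity are unavailable. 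Second, by forcing $cl_Z(U)\subset\bigcap_{n\le N}(A_n\cap W_n)$ you control $f^n$ on $cl_Z(U)\cap X$ rather than merely on $U$, which is what the conclusion actually demands; the paper's final conditions constrain only $f(U)$. Your reading of the conclusion as a statement about $cl_Z(U)\cap X$ (consistent with Definition~\ref{defin:Nbrightcolor}, which requires an $N$-bright color to be a color, supplied by your $n=1$ case), and your minimal-$n$ appeal to Remark~\ref{rem:mperiodicpointfixedbyfm} to get $x\notin f^n(x)$ for all $n\le N$, are both correct and make the lemma's somewhat loosely stated conclusion literally precise.
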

\begin{proof}
For simplicity, put $N=2$. For any $z\in  f^2(x)$  fix an open neighborhood
$U_z$ of $z$ in $Z$ whose closure misses $x$. This is possible because $x\not\in f^2(x)$ by hypothesis.
Now for any $y\in  f(x)$, fix an open neighborhood
$U_y$ of $y$  with the following properties:
\begin{enumerate}
	\item The closure of $U_y$ misses $x$; and 
	\item $f(U_y)\subset \langle U_z: z\in f(y)\rangle$.
\end{enumerate}
The first requirement is achievable due to the fact that $f$'s period at $x$ is greater than $2$.
The second requirement is achievable due to continuity of $f$.
Finally, we can find an open neighborhood $U$ of $x$ with the following properties:
\begin{enumerate}
	\item $cl_Z(U)$ misses $cl_Z(U_y)$ for every $y\in f(x)\cup f^2(x)$; and
	\item $f(U)\subset \langle U_y:y\in f(x)\rangle$.
\end{enumerate}
The first property is achievable because both $f(x)$ and $f^2(x)$ are finite and $x\not\in cl_Z(U_y)$ for all
$y\in f(x)\cup f^2(x)$ by our choice. The second property is achievable due to continuity of $f$.
Clearly, this $U$ is as desired.
\end{proof}

\par\bigskip\noindent
\begin{lem}\label{lem:betaofNbrighcolor}
Let $Z$ be a normal space; $X$ its closed subspace; and $f:X\to \exp_k Z$ a continuous map.
If $\mathcal F$ is an $N$-bright coloring of $f$, then $\mathcal G = \{\beta F:F\in \mathcal F\}$
is an $N$-bright coloring of $\tilde f$.
\end{lem}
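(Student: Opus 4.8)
The plan is to prove the statement by showing that each member of $\mathcal G$ is an $N$-bright color of $\tilde f$; that $\mathcal G$ is a finite cover of $\beta X$ is automatic, since $\mathcal F$ is a finite closed cover of the normal space $X$ and, identifying $\beta F$ with $\operatorname{cl}_{\beta X}F$ (legitimate because a closed subset of a normal space is $C^{*}$-embedded in it), $\bigcup_{F\in\mathcal F}\beta F=\operatorname{cl}_{\beta X}\big(\bigcup_{F\in\mathcal F}F\big)=\operatorname{cl}_{\beta X}X=\beta X$. So I would fix $F\in\mathcal F$ and reduce the problem to checking that $\beta F$ misses $\operatorname{cl}_{\beta Z}\big(\bigcup\tilde f^{\,n}(\beta F)\big)$ for every $n\le N$.

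The key step is the inclusion
\[
\bigcup\tilde f^{\,n}(\beta F)\ \subseteq\ \operatorname{cl}_{\beta Z}\Big(\bigcup f^{n}(F)\Big),\qquad n\le N,
\]
which I would establish by induction on $n$. For $n=1$, continuity of $\tilde f$ gives $\tilde f(\beta F)=\tilde f\big(\operatorname{cl}_{\beta X}F\big)\subseteq\operatorname{cl}_{\exp_{k}\beta Z}\big(\tilde f(F)\big)=\operatorname{cl}_{\exp_{k}\beta Z}\big(f(F)\big)$, and then one applies the elementary fact that $\bigcup\operatorname{cl}(\mathcal C)\subseteq\operatorname{cl}\big(\bigcup\mathcal C\big)$ for any family $\mathcal C$ of closed sets — a point of a limit member of $\mathcal C$ is witnessed, through the Vietoris neighbourhood $\langle\beta Z,V\rangle$, by a member of $\mathcal C$ meeting $V$. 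For the inductive step, by the hypothesis and the fact that $\bigcup f^{n}(F)\subseteq X$ is $C^{*}$-embedded, the set $S_{n}:=\bigcup\tilde f^{\,n}(\beta F)$ lies in $\operatorname{cl}_{\beta X}\big(\bigcup f^{n}(F)\big)\subseteq\beta X$, so $S_{n+1}=\bigcup_{y\in S_{n}}\tilde f(y)\subseteq\bigcup\tilde f\big(\operatorname{cl}_{\beta X}(\bigcup f^{n}(F))\big)\subseteq\operatorname{cl}_{\beta Z}\big(\bigcup f(\bigcup f^{n}(F))\big)=\operatorname{cl}_{\beta Z}\big(\bigcup f^{n+1}(F)\big)$, again using continuity of $\tilde f$ and the union-of-closures fact. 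Granting the inclusion, I would finish as follows: since $\mathcal F$ is an $N$-bright coloring, $F$ and the closed set $\operatorname{cl}_{Z}\big(\bigcup f^{n}(F)\big)$ are disjoint in the normal space $Z$, hence have disjoint closures in $\beta Z$; therefore $\beta F$ misses $\operatorname{cl}_{\beta Z}\big(\bigcup f^{n}(F)\big)\supseteq\operatorname{cl}_{\beta Z}(S_{n})$, as required.

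Equivalently — and this is the cleaner packaging — once one knows that $f^{n}\colon X\to\exp_{k^{n}}Z$ is a total continuous map and that $\widetilde{(f^{n})}=\tilde f^{\,n}$, the whole conclusion drops out of the $\exp$-version of Proposition \ref{pro:proposition29} applied to $f^{n}$ (with $k^{n}$ in place of $k$): $F$ is a bright color of $f^{n}$ precisely because it misses $\operatorname{cl}_{Z}\big(\bigcup f^{n}(F)\big)$, hence $\beta F$ is a bright color of $\widetilde{(f^{n})}=\tilde f^{\,n}$. I expect the step requiring the most care to be exactly this one in the present, not-necessarily-self-map, setting: Propositions \ref{pro:continuouscomposition} and \ref{pro:commute} are stated only for self-maps $g\colon X\to\exp_{k}X$, so for $f\colon X\to\exp_{k}Z$ with $X$ properly contained in $Z$ one must either check that the iterates $f^{n}$ are genuinely defined on all of $X$ (i.e. $f^{n-1}(x)\subseteq X$ for the relevant points) and rerun those proofs, or interpret $f^{n}$ and $\tilde f^{\,n}$ as the partial maps defined on the closed sets where the iterate exists and argue on those pieces. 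Everything else — the two routine topological facts used above and the fact that disjoint closed subsets of the normal space $Z$ have disjoint closures in $\beta Z$ — is standard.
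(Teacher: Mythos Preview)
Your argument is correct and follows the same route as the paper's: establish that $\mathcal G$ covers $\beta X$ by normality, then for each $F$ obtain $\operatorname{cl}_{\beta Z}\big(\bigcup\tilde f^{\,n}(\beta F)\big)=\operatorname{cl}_{\beta Z}\big(\bigcup f^{n}(F)\big)$ and invoke that disjoint closed sets in the normal space $Z$ have disjoint closures in $\beta Z$. Your inductive justification of the closure identity and your flag about iterates in the non-self-map setting are exactly the points the paper's two-line proof leaves implicit.
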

\begin{proof}
Since $X$ is normal and $\mathcal F$ is a cover of $X$, we conclude that $\mathcal G$ is a cover of $\beta X$.
It is left to show that each element of $\mathcal G$ is an $N$-bright color of $f$. Fix $\beta F\in \mathcal G$.
We have
$$
cl_{\beta Z}(\bigcup \tilde f^n(\beta F))= cl_{\beta Z}(\bigcup \{\tilde f^n(x):x\in \beta F\})=cl_{\beta Z}(\bigcup \{f^n(x):x\in  F\}).
$$
Observe that the rightmost part of this equality misses $\beta F$ because $Z$ is normal and 
$cl_{Z}(\bigcup \{f^n(x):x\in  F\})$ misses $F$ by hypothesis.
\end{proof}

\par\bigskip\noindent
\begin{thm}\label{thm:periodiclocalycompact}
Let $Y$ be a locally compact Lindel\"of space of finite dimension; $X$ its closed subspace; and $f: X\to \exp_k Y$ continuous.
Suppose $\tilde f: \beta X\to \exp_k \beta Y$ has a point of period $M$. Then $f$ has a point of
period $M$.
\end{thm}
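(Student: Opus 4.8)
The plan is to reproduce the architecture of the proof of Theorem \ref{thm:periodicpoint}, substituting for its two $\mathbb R^n$-specific devices -- the self-map extension of Proposition \ref{pro:goodextension} and the commutation $\widetilde{(g^m)}=\tilde g^m$ of Proposition \ref{pro:commute} -- tools that survive on an arbitrary locally compact Lindel\"of space of finite dimension. Fix $p\in\beta X$ at which $\tilde f$ has period $M$; we may assume $p\in\beta X\setminus X$. By Remark \ref{rem:mperiodicpointfixedbyfm}, $p$ carries a $\tilde f$-cycle of length $M$ but none of length $m<M$. Two facts drive the $\mathbb R^n$ argument and I would isolate them here: (a) the set of points of $\beta X$ carrying a $\tilde f$-cycle of length $<M$ is closed and misses $p$; and (b) a genuine period-$M$ point of $f$ accumulates at $p$. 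Fact (a) is cheap: for each $m$ the map $\tilde f^m$ is upper semicontinuous and compact-valued on the compact space $\beta X$, so its set of return points is closed, and $p$ avoids these for $m<M$ by the choice of $p$. Choosing an open $O\ni p$ in $\beta Y$ whose closure misses this lower-period set then localizes the problem.

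The core new ingredient, and the reason for the machinery of this section, is an $N$-bright strengthening of Theorem \ref{thm:relativecolorableviaspectra}: if $f\colon X\to\exp_k Y$ has no point of period $\le N$, then $f$ admits a finite $N$-bright coloring. I would obtain this by rerunning the factorizing $\omega$-spectrum argument of Theorem \ref{thm:relativecolorableviaspectra} with $1$-bright colors replaced throughout by $N$-bright colors (Definition \ref{defin:Nbrightcolor}); Lemma \ref{lem:verygoodneighborhood} supplies the local $N$-bright colors, finite dimensionality compresses them into a finite coloring at one level $\beta$ of the spectrum, and this coloring pulls back along $p_\beta$ verbatim, the separable-metrizable base case being settled by embedding $Y_\beta$ as a closed subset of some $\mathbb R^m$ and invoking the $N$-bright form of Theorem \ref{thm:RnMainVersion}. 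Feeding the resulting coloring into Lemma \ref{lem:betaofNbrighcolor}, and noting that an $N$-bright coloring of $\tilde f$ forbids cycles of length $\le N$ -- a length-$m$ cycle at $x\in\beta F$ would put $x\in cl_{\beta Y}(\bigcup\tilde f^m(\beta F))$, against the brightness of $\beta F$ -- I get the guiding equivalence: for every $N$, $f$ has a point of period $\le N$ if and only if $\tilde f$ does.

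It remains to produce the period-$M$ point of fact (b). Once I know $p\in{\rm Fix}(\tilde f^M)=cl_{\beta X}({\rm Fix}(f^M))$, I am done: ${\rm Fix}(f^M)$ then meets $cl_{\beta Y}(O)\cap X$, and any witness $y$ there carries an $f$-cycle of length $M$ while, lying in $cl_{\beta Y}(O)$ and hence outside every lower-period set, carrying none shorter; thus $f$ has period exactly $M$ at $y$. So the whole theorem reduces to the iterate identity ${\rm Fix}(\tilde f^M)=cl_{\beta X}({\rm Fix}(f^M))$, which is the analogue of Proposition \ref{pro:closureofFix} for the $M$-th iterate.

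This identity is where I expect the real difficulty, and it is exactly the point at which the absence of a self-map bites: $f^M$ is only partially defined, its values lie in $\exp_{k^M}Y$ rather than $\exp_k Y$, and the period-$M$ orbit witnessing $p$ roams all of $\beta Y$ and escapes every neighbourhood of $p$, so one cannot simply shrink the domain of $f$ as in Theorem \ref{thm:periodicpoint}. The device I would use is to linearize the orbit: on the closed subspace $E=\{(x_1,\dots,x_M)\in X^M:x_{i+1}\in f(x_i)\}$ of the locally compact Lindel\"of finite-dimensional space $Y^M$, the assignment $(x_1,\dots,x_M)\mapsto\{x_1\}\times f(x_M)$ defines, on the graph of the first-coordinate projection, a continuous map to $\exp_k(Y^M\times Y)$ whose fixed-point set is precisely the set of closed $f$-orbits of length $M$; Proposition \ref{pro:closureofFix} applies to it directly and, after pushing the resulting equality forward along the projection to the first coordinate, yields ${\rm Fix}(\tilde f^M)=cl_{\beta X}({\rm Fix}(f^M))$. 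The crux is to verify that the \v{C}ech--Stone extension of this orbit map reproduces the return structure of $\tilde f^M$ and that $p$ is the image of a point of $\beta E$; with finite dimensionality of $Y^M$ guaranteed after the spectral reduction to separable metrizable factors, everything else is routine.
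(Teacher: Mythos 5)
Your first component is, in substance, the paper's entire proof: the paper argues the contrapositive, pushes $f$ down a factorizing $\omega$-spectrum to some $f_{\alpha}\colon X_{\alpha}\to\exp_k Y_{\alpha}$, transfers ``no points of period $\le M$'' to $\tilde f_{\alpha}$ by the metrizable case (Theorem \ref{thm:periodicpoint}), extracts a finite $M$-bright coloring, pulls it back along $p_{\alpha}$, and pushes it to $\beta$ via Lemma \ref{lem:betaofNbrighcolor}. One correction to your sketch of this part: the finiteness of the coloring does not come from ``finite dimensionality compressing'' the local colors, nor from an ``$N$-bright form of Theorem \ref{thm:RnMainVersion}'' (no such statement exists in the paper); the paper first applies Theorem \ref{thm:periodicpoint} to conclude that $\tilde f_{\alpha}\colon\beta X_{\alpha}\to\exp_k\beta Y_{\alpha}$ has no points of period $\le M$, then applies Lemma \ref{lem:verygoodneighborhood} to $\tilde f_{\alpha}$ on the \emph{compact} space $\beta X_{\alpha}$, where the local neighborhoods reduce to a finite coloring, and restricts back to $X_{\alpha}$. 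Your invoked $N$-bright colorability theorem is plausible but unestablished as stated; replaced by the compactness route it becomes correct, and then your ``guiding equivalence'' is exactly what the paper's proof delivers. (For what it is worth, you spotted a real looseness: the paper's own proof slides between ``period $M$'' and ``period at most $M$,'' so it too really establishes the $\le M$ version.)

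The genuine gap is in your second component, and it sits precisely where you park it as ``the crux \dots\ everything else is routine.'' Your linearization $g\colon E\to\exp_k(Y^M)$ (with fixed points the $M$-cycles of $f$) does let Proposition \ref{pro:closureofFix} produce ${\rm Fix}(\tilde g)=cl_{\beta E}({\rm Fix}(g))$, but this identity lives on $\beta E$ and in $\beta(Y^M)$, while the witnessing $\tilde f$-cycle $(p,p_2,\dots,p_M)$ lives in $(\beta Y)^M$, and $\beta(Y^M)\neq(\beta Y)^M$. The canonical map sends $\beta E$ onto $cl_{(\beta Y)^M}(E)$, \emph{not} onto the set of all $\tilde f$-orbit tuples; so to conclude ``$p$ is the image of a point of $\beta E$'' you must show that every $\tilde f$-cycle tuple is a limit of $f$-orbit data, and even then that some preimage $e$ satisfies $e\in\tilde g(e)$ upstairs rather than merely after projection. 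The first of these is a statement of exactly the same depth as Proposition \ref{pro:closureofFix} itself --- it is the kind of claim the entire bright-coloring/spectral machinery exists to prove, and it is false for arbitrary maps and arbitrary compactifications --- so it cannot be dismissed as routine; as written, the reduction is circular. If you delete the second component and either settle for the $\le M$ statement (which is what the paper's proof, and your first component, actually prove) or derive exactness afterwards from the closure identities of Theorem \ref{thm:closureof periodicpoints} applied at $M$ and $M-1$, you land on the paper's argument; the orbit-space device, attractive as it is, does not close by itself.
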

\begin{proof}
We assume that $f$ has no points of period $M$ and we need to show that $\tilde f$ has no such points either.
By theorem \ref{thm:periodicpoint}, we may assume that $Y$ is not metrizable.
As in the argument of Theorem \ref{thm:relativecolorableviaspectra},  we represent $Y$ as the inverse limit of a factorizing $\omega$-spectrum 
${\mathcal S}_{Y} = \{ Y_{\alpha}, q_{\alpha}^{\beta},A\}$ consisting of locally compact separable metrizable spaces $Y_{\alpha}$ of the same dimension as $Y$ and perfect projections $q_{\alpha}^{\beta} \colon Y_{\beta} \to Y_{\alpha}$, $\beta \geq \alpha$.
Similarly, ${\mathcal S}_{X} = \{ X_{\alpha}, p_{\alpha}^{\beta}, A\}$, 
where $X_{\alpha} = q_{\alpha}(X)$ and $p_{\alpha}^{\beta} = q_{\alpha}^{\beta}|_{X_{\beta}}$, 
$\beta \geq \alpha$, $\alpha, \beta \in A$; and $\exp_{k}{\mathcal S}_{Y} = \{ \exp_{k}Y_{\alpha}, \exp_{k}q_{\alpha}^{\beta}, A \}$. 
Also as in the argument of  Theorem \ref{thm:relativecolorableviaspectra} we may assume that for each $\alpha\in A$, there
is a map $f_{\alpha} \colon X_{\alpha} \to \exp_{k}Y_{\alpha}$ such that $\exp_{k}q_{\alpha}\circ f= f_{\alpha}\circ p_{\alpha}$.

Since $f$ has no points of period at most $M$, by Lemma \ref{lem:verygoodneighborhood}, we can find family $\{F_n:n\in \omega\}$
consisting of  functionally closed sets that are $M$-bright colors for $f$.
 As in the proof of Theorem \ref{thm:relativecolorableviaspectra}, we can find $\alpha^*\in A$ such that
 for every element $\beta\in A$ greater than $\alpha^*$, the set $p_\beta(F_n)$ is an $M$-color for $f_\alpha$.
 Fix $\alpha>\alpha^*$ in $A$.
 By Theorem \ref{thm:periodicpoint}, we conclude that $\tilde f_\alpha : \beta X_\alpha\to \exp_k \beta Z_\alpha$ has
 no points of period at most $M$.
 By Lemma \ref{lem:verygoodneighborhood}, $\tilde f_\alpha$ has an $M$-bright coloring. Therefore, $f_\alpha$ has an $M$-bright
 coloring  $\mathcal F$.
 As in the argument of Theorem  \ref{thm:relativecolorableviaspectra}, we conclude that $\mathcal S= \{p^{-1}_\alpha (F): F\in \mathcal F\}$
 is a bright coloring of $f$. By Lemma \ref{lem:betaofNbrighcolor}, $\mathcal G = \{\beta S:S\in \mathcal S\}$
 is an $M$-bright coloring of $\tilde f$, whence $\tilde f$ has no points of period $M$.
\end{proof}
\par\bigskip\noindent
\begin{thm}\label{thm:main}
Let $Z$ be a locally compact Lindel\"of space of finite dimension and $X$ its closed subspace. Then a continuous map $f: X\to\exp_k Z$
has a point of period $M$ if and only if $\tilde f:\beta X\to \exp_k \beta Z$ has.
\end{thm}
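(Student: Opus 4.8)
The plan is to split the biconditional of Theorem~\ref{thm:main} into its two implications and to observe that the difficult one is already in hand. The implication ``$\tilde f$ has a point of period $M$ implies $f$ has a point of period $M$'' is exactly Theorem~\ref{thm:periodiclocalycompact} applied with $Y=Z$, so for that direction there is nothing to add. It remains to prove necessity: if $f$ has a point of period $M$, then so does $\tilde f$.

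For necessity I would work directly with witnessing sequences. Fix $x\in X$ at which $f$ has period $M$, together with a sequence $\langle x_1=x,x_2,\dots,x_M\rangle$ in $X$ realizing it (so $x_{i+1}\in f(x_i)$ for $1\le i<M$ and $x\in f(x_M)$). Since $\tilde f$ extends $f$ and $X\subseteq\beta X$, this same sequence witnesses that $\tilde f$ has period at most $M$ at $x$. To see that the period is exactly $M$, suppose $\langle y_1=x,y_2,\dots,y_K\rangle$ is a sequence in $\beta X$ with $y_{i+1}\in\tilde f(y_i)$ for $1\le i<K$, $x\in\tilde f(y_K)$, and $K\le M$. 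The key point is the identity $\beta X\cap Z=X$: because $X$ is closed in the (normal) space $Z$, it is $C^{*}$-embedded, so $\beta X$ is identified with $cl_{\beta Z}(X)$, whence $\beta X\cap Z=cl_{\beta Z}(X)\cap Z=cl_{Z}(X)=X$. An induction then shows each $y_i$ lies in $X$: indeed $y_1=x\in X$, and if $y_i\in X$ then $\tilde f(y_i)=f(y_i)\in\exp_k Z$, so $y_{i+1}\in\tilde f(y_i)\subseteq Z$, and being also in $\beta X$ it lies in $\beta X\cap Z=X$. Hence $\langle y_1,\dots,y_K\rangle$ (note $x\in\tilde f(y_K)=f(y_K)$) is a witnessing sequence for $f$ at $x$, forcing $K\ge M$. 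So $\tilde f$ has period exactly $M$ at $x$, and necessity is proved.

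The step I expect to carry all the weight is not in Theorem~\ref{thm:main} at all but in Theorem~\ref{thm:periodiclocalycompact}, which is already established: its proof reduces the locally compact Lindel\"of finite-dimensional case to the Euclidean case of Theorem~\ref{thm:periodicpoint} by writing $Z$ as the limit of a factorizing $\omega$-spectrum of separable metrizable spaces and transporting $M$-bright colorings both up the spectrum and across the \v Cech-Stone compactification via Lemmas~\ref{lem:verygoodneighborhood} and~\ref{lem:betaofNbrighcolor}. Within the present argument the only point demanding any care is the bookkeeping identity $\beta X\cap Z=X$ used above, and that is precisely where closedness of $X$ and normality of $Z$ are needed.
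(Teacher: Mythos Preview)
Your proposal is correct and matches the paper's implicit approach: the paper states Theorem~\ref{thm:main} without proof, treating it as the combination of Theorem~\ref{thm:periodiclocalycompact} (sufficiency) with the ``obvious'' necessity already asserted in the proof of Theorem~\ref{thm:periodicpoint}. Your explicit verification of necessity via the identity $\beta X\cap Z=X$ (using that $Z$ is regular Lindel\"of, hence normal, so the closed subspace $X$ is $C^{*}$-embedded) cleanly fills in the detail the paper leaves to the reader, namely that a witnessing sequence for $\tilde f$ starting in $X$ cannot escape $X$ and hence cannot be shorter than $M$.
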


\par\bigskip\noindent
In our final statement  $P(g, K)$ is the set of all points of the domain of $g$ at which $g$ has period at most $K$.

\par\bigskip\noindent
\begin{thm}\label{thm:closureof periodicpoints}
Let $X$ be a closed subspace of a locally compact Lindel\"of space $Z$ of finite dimension.
If $f:X\to \exp_k Z$ is continuous then $P(\tilde f, M)=cl_{\beta X}(P(f, M))$ for any positive integer $M$.
\end{thm}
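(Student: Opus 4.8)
The plan is to prove the two inclusions of $P(\tilde f,M)=\operatorname{cl}_{\beta X}(P(f,M))$ separately, the inclusion $\supseteq$ being routine and the inclusion $\subseteq$ carrying all the difficulty. Throughout I will use Remark~\ref{rem:mperiodicpointfixedbyfm} in the form $P(g,K)=\bigcup_{m\le K}\operatorname{Fix}(g^m)$, valid whenever the relevant iterates are defined. For $\supseteq$: every witnessing loop of $f$ based at a point $x\in X$ is also a witnessing loop of $\tilde f$, so $P(f,M)\subseteq P(\tilde f,M)$; and $P(\tilde f,M)$ is closed because, $\beta Z$ being compact, each set $\{(z_1,\dots,z_m)\in(\beta X)^m: z_{i+1}\in\tilde f(z_i),\ z_1\in\tilde f(z_m)\}$ is closed (the relation $\{(z,w):w\in\tilde f(z)\}$ is closed for a continuous map into a hyperspace over a compactum), hence compact, and its image under the first projection is the closed set of points admitting a length-$m$ return loop; then $P(\tilde f,M)$ is the union of these over $m\le M$. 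Taking closures gives $\operatorname{cl}_{\beta X}(P(f,M))\subseteq P(\tilde f,M)$.

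The engine for the reverse inclusion is the self-map case, where the argument is clean. If $g\colon W\to\exp_k W$ is a continuous self-map of a locally compact Lindel\"of finite-dimensional $W$, then by Proposition~\ref{pro:continuouscomposition} every iterate $g^m$ is a total continuous map $W\to\exp_{k^m}W$, and by Proposition~\ref{pro:commute} $\widetilde{g^m}=\tilde g^m$. Hence $\operatorname{Fix}(\tilde g^m)=\operatorname{Fix}(\widetilde{g^m})$, and Proposition~\ref{pro:closureofFix} applied to $g^m$ (with $k^m$ in place of $k$) yields $\operatorname{Fix}(\tilde g^m)=\operatorname{cl}_{\beta W}(\operatorname{Fix}(g^m))$. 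Since finite unions commute with closure,
\[
P(\tilde g,M)=\bigcup_{m\le M}\operatorname{Fix}(\tilde g^m)=\bigcup_{m\le M}\operatorname{cl}_{\beta W}(\operatorname{Fix}(g^m))=\operatorname{cl}_{\beta W}(P(g,M)),
\]
so the theorem holds verbatim for self-maps.

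To reach the stated (non-self-map) theorem for $Z=\mathbb R^n$, I would reduce to the self-map case exactly as in Theorem~\ref{thm:periodicpoint}: after raising the dimension we may assume $X\subseteq\mathbb R^{n-1}\times\{0\}$ and $f\colon X\to\exp_k(\mathbb R^{n-1}\times\{0\})$, and then Proposition~\ref{pro:goodextension} supplies a self-map $g\colon\mathbb R^n\to\exp_k\mathbb R^n$ extending $f$. Part~1 of that proposition forces every $\tilde g$-loop meeting $\beta X$ to remain inside $\beta X$, where $\tilde g$ agrees with $\tilde f$; this is part~2 and gives $P(\tilde f,M)=P(\tilde g,M)\cap\beta X$. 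The same height-monotonicity shows that the coordinate projection $r\colon\mathbb R^n\to\mathbb R^{n-1}\times\{0\}$ carries $P(g,M)$ into $P(f,M)$, while its extension $\beta r$ fixes $\beta X$ pointwise. Combining with the self-map identity above,
\[
P(\tilde f,M)=\operatorname{cl}_{\beta\mathbb R^n}(P(g,M))\cap\beta X;
\]
applying $\beta r$ to a point $w$ of the right-hand side gives $w=\beta r(w)\in\operatorname{cl}_{\beta\mathbb R^n}(r(P(g,M)))\subseteq\operatorname{cl}_{\beta\mathbb R^n}(P(f,M))$, whence $w\in\operatorname{cl}_{\beta X}(P(f,M))$, which is $\subseteq$. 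The general locally compact Lindel\"of finite-dimensional case then follows from the metrizable case by the factorizing $\omega$-spectrum reduction already used in Theorems~\ref{thm:relativecolorableviaspectra} and~\ref{thm:periodiclocalycompact}: represent $Z$, $X$, $f$ as inverse limits of metrizable data $Z_\alpha$, $X_\alpha$, $f_\alpha$, apply the metrizable result to each $f_\alpha$, and transport the resulting $M$-bright colorings through the spectrum via Lemmas~\ref{lem:verygoodneighborhood} and~\ref{lem:betaofNbrighcolor}.

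The main obstacle is precisely the one the self-map detour is designed to remove: for a map $f\colon X\to\exp_k Z$ with $X$ properly contained in $Z$, the iterates $f^m$ (Definition~\ref{defin:composition}) need not be defined on all of $X$, and a return loop of $\tilde f$ based at a point of $\beta X$ may wander outside $\beta X$, so one can neither invoke Proposition~\ref{pro:closureofFix} on the iterates directly nor localize by restricting $f$ to a neighborhood as in Proposition~\ref{pro:closureofFix} (an $M$-bright color of a restriction $f|_Y$ need not be $M$-bright for $f$, since the full forward orbit can re-enter $Y$). Passing to a genuine self-map restores totality of the iterates and, through the nonescape property, confines all relevant loops; verifying that the construction of Proposition~\ref{pro:goodextension} delivers both $P(\tilde f,M)=P(\tilde g,M)\cap\beta X$ and $r(P(g,M))\subseteq P(f,M)$ is the step I expect to demand the most care.
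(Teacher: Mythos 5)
Your metrizable-case argument is correct, and it is genuinely different from the paper's. The paper's entire proof of Theorem~\ref{thm:closureof periodicpoints} is the localization you declined to use: given $x\notin \operatorname{cl}_{\beta X}(P(f,M))$, pick a neighborhood $U$ of $x$ whose closure misses $\operatorname{cl}_{\beta X}(P(f,M))$, put $A=\operatorname{cl}_{\beta X}(U)\cap X$, observe that $f|_A$ has no points of period at most $M$ (any $f|_A$-loop lies in $A$ and would put its base point in $P(f,M)\cap A=\emptyset$), apply Theorem~\ref{thm:periodiclocalycompact} to $f|_A$, and use $\widetilde{(f|_A)}=\tilde f|_{\beta A}$ to conclude $x\notin P(\tilde f,M)$. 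Your route through the self-map $g$ of Proposition~\ref{pro:goodextension} instead rests on two claims that do check out from the construction: part~2 of that proposition is literally $P(\tilde f,M)=P(\tilde g,M)\cap\beta X$, and for $r(P(g,M))\subseteq P(f,M)$ one sums the height increments $\operatorname{dist}(\pi_{\mathbb R^{n-1}}(x_i),X)$ around a $g$-loop, forcing each to vanish, so the loop sits at constant height over points of $X$ (where $h=f$) and projects to an $f$-loop of the same length. Combined with $P(\tilde g,M)=\bigcup_{m\le M}\operatorname{Fix}(\tilde g^m)$ and Propositions~\ref{pro:continuouscomposition}, \ref{pro:commute}, \ref{pro:closureofFix}, this gives the Euclidean case cleanly; indeed your bookkeeping with total iterates of $g$ avoids altogether the question, left implicit in the paper's final sentence, of whether a $\tilde f$-loop based at a point of $\beta A$ can be confined to $\beta A$.

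The genuine gap is your last paragraph. ``Apply the metrizable result to each $f_\alpha$ and transport the resulting $M$-bright colorings'' does not parse: Lemma~\ref{lem:verygoodneighborhood} manufactures $M$-bright colors only under the hypothesis that the map has \emph{no} points of period at most $M$, whereas in the present theorem $f$, and hence the maps $f_\alpha$, will in general have many periodic points -- the closure identity for $f_\alpha$ produces no colorings whatsoever. What the spectral machinery of Theorem~\ref{thm:periodiclocalycompact} transports is the global statement ``no points of period at most $M$''; the closure equality is a pointwise statement about each $x\in\beta X\setminus\operatorname{cl}_{\beta X}(P(f,M))$, and the only available bridge between the two is exactly the restriction step you ruled out (restrict to $A=\operatorname{cl}_{\beta X}(U)\cap X$, where the restricted map is periodic-point free up to order $M$, then apply the absence-transfer to $f|_A$). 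Note also that your stated objection to localization is aimed at a step the paper never takes: the paper does not claim that colors of $f|_A$ remain $M$-bright for $f$; it transfers the absence of periodic points of $f|_A$ to $\widetilde{(f|_A)}$ and only then identifies $\widetilde{(f|_A)}$ with $\tilde f|_{\beta A}$. As written, then, your proof establishes the theorem for $Z=\mathbb R^n$ but leaves the nonmetrizable case unproven; the repair is to replace your spectral paragraph by the paper's localization argument, which needs no new spectral work beyond Theorem~\ref{thm:periodiclocalycompact} (while attending, at the final identification, to the loop-confinement point your own discussion correctly isolates).
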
 
\begin{proof}
The $\supset$-inclusion is obvious. To prove the inclusion "$\subset$", fix $x\not\in cl_{\beta X} (P(f, M))$ and an open neighborhood $U$ of $x$
in $\beta X$ whose closure misses $cl_{\beta X} (P(f, M))$. Put $A= cl_{\beta X}(U)\cap X$. Then $f|_A$ has no
points of period less than or equal to $M$. By Theorem \ref{thm:periodiclocalycompact}, $\widetilde {(f|_A)}$ has no such points either.
Since $\widetilde {(f|_A)}$ coincides with $\tilde f|_A$ and $x\in A$ we conclude that 
$x\not\in P(\tilde f, M)$, which proves the statement.
\end{proof}


\begin{thebibliography}{99}



\bibitem{R2}
R.~Z.~Buzyakova, {\it Multivalued Fixed-point free maps on Euclidean spaces}, Proc. AMS, accepted.

\bibitem{BC}
R.~Z.~Buzyakova, A.~Chigogidze, 
{\it Fixed-point free maps of Euclidean spaces}, Fundam Mathematicae {\bf 212} (2011), 1--16.

\bibitem{chibook}
A.~Chigogidze, {\it Inverse Spectra}, North Holland, Amsterdam, 1996.

\bibitem{D}
E.~van Douwen, {\it $\beta X$ and fixed-point free maps}, Topology Appl. {\bf 51} (1993), 191--195.

\bibitem{Eng}
R.~Engelking, {\it General Topology}, PWN, Warszawa, 1977.

\bibitem{F}
V.~V.~Fedor\v cuk, {\it Covariant functors in a category of compacta, absolute retracts and Q-manifolds}, (Russian) 
Uspekhi Mat. Nauk 36 (1981), no. 3(219), 177Ð195, 256.

\bibitem{K}
M.~Kat\v{e}tov, {\it A theorem on mappings}, Comm. Math. Univ. Carolinae {\bf 8} (1967), 431--433.

\bibitem{VM2}
J. van Mill, {\it The infinite-dimensional topology of function spaces}, Elsevier, Amsterdam, 2001.
\end{thebibliography}
\end{document}